\newcommand{\cS}{\mathcal{S}}
\newcommand{\largeconst}{{R}}
\newcommand{\ourpower}{{r}}
\newcommand{\smallconst}{{\frac{4-\ourpower}{8}}}
\DeclareMathOperator{\rad}{rad}
\DeclareMathOperator{\diam}{diam}
\DeclareMathOperator{\cent}{Center}
\newcommand{\FFP}{{F\!F\!P}}
\newcommand{\N}{\mathbb{N}}
\newcommand{\R}{\mathbb{R}}
\newcommand{\D}{D}
\renewcommand{\H}{\mathbb{H}}
\newcommand{\cB}{\mathcal{B}}
\newcommand{\cH}{\mathcal{H}}
\newcommand{\ball}{B}
\newcommand{\dist}{d}
\newcommand{\Kset}{{E}}
\renewcommand{\epsilon}{\varepsilon}
\renewcommand{\phi}{\varphi}
\newtheorem{theorem}{Theorem}[section]
\newtheorem{proposition}[theorem]{Proposition}
\newtheorem{lemma}[theorem]{Lemma}
\newtheorem{maintheorem}{Theorem}
\theoremstyle{remark}
\newtheorem{remark}{Remark}
\begin{document}

\title{AN UPPER BOUND FOR THE LENGTH OF A TRAVELING SALESMAN PATH IN THE HEISENBERG GROUP}
\author{Sean Li \and Raanan Schul}
\thanks{
S.~Li was partially supported by a postdoctoral research fellowship 
NSF DMS-1303910.
R.~ Schul was partially supported by a fellowship from the  Alfred P. Sloan Foundation as well as  by NSF  DMS 11-00008.}
\date{\today}
\subjclass[2010]{Primary 28A75, 53C17}
\keywords{Heisenberg group, Traveling Salesman Theorem, Jones $\beta$ numbers, curvature}
\address{Department of Mathematics,  University of Chicago, 5734 S University Avenue
Chicago, IL 60637}
\email{seanli@math.uchicago.edu}
\address{Department of Mathematics, Stony Brook University, Stony Brook, NY 11794-3651}
\email{schul@math.sunysb.edu}

\begin{abstract} 
We show that
a sufficient condition for a subset $\Kset$ in the Heisenberg group (endowed with the 
Carnot-Carath\'{e}odory metric) to be  contained in a rectifiable curve is that  it satisfies a modified analogue of Peter Jones's geometric lemma.  
Our estimates improve on those of  \cite{FFP},  
by replacing the power $2$ of the Jones-$\beta$-number with any power  $\ourpower<4$.
This complements (in an open ended way) our work  \cite{Li-Schul-beta-leq-length}, where we showed that such an estimate was necessary, but with  $\ourpower=4$.  
\end{abstract}

\maketitle


\section{Introduction}

Let $\H$ denote the Heisenberg group, endowed with the Carnot-Carath\'{e}odory distance and $\Kset \subseteq \H$ be any subset. Let $B(x,t)$ be the (closed) ball of radius $t$ around$x$.  Then we denote
\begin{align*}
 \beta_{\Kset,\H}(B)=\inf\limits_{L} \sup\limits_{x\in \Kset\cap B} \frac{\dist(x,L)}{\diam(B)},
\end{align*}
with the infimum is taken over all {\it horizontal lines} $L$ (to be defined in the next section). 
If it is clear from the context which set $E$ we are referring to, we will omit it from the notation and write $\beta_{\H}(B)$

In this paper we prove the following theorem.
\begin{maintheorem} \label{t:maintheorem-INT}
Let $\ourpower<4$ be fixed.
There is a constant $C=C(\ourpower)>0$ such that for any set $\Kset \subseteq \H$ if
\begin{align*}
  \diam(\Kset) + \int\limits_{\H} \int\limits_0^{+\infty} \beta_{\Kset,\H}(B(x,t))^{\ourpower}\frac{dt}{t^4}d\cH^4(x) < \infty,
\end{align*}
then there exists a rectifiable curve $\Gamma \supset \Kset$
such that
\begin{align}\label{e:length-beta-upper-bound-INT}
\cH^1(\Gamma)
\leq C\left(\diam(\Kset) + \int\limits_{\H} \int\limits_0^{+\infty} \beta_{E,\H}(B(x,t))^{\ourpower}\frac{dt}{t^4}d\cH^4(x)\right).
\end{align}
\end{maintheorem}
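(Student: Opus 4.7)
The plan is to adapt Peter Jones's multi-scale polygonal approximation to the sub-Riemannian setting of $\H$ in the spirit of \cite{FFP}, but with a sharper local insertion estimate that allows one to replace their exponent $2$ by any $\ourpower<4$. For each $n\ge 0$ I would fix a nested sequence of maximal $2^{-n}$-separated nets $\net_n\subseteq\Kset$ and construct, by induction on $n$, a horizontal piecewise-geodesic curve $\Gamma_n$ visiting every point of $\net_n$. The curve $\Gamma_0$ is taken to be a horizontal geodesic of length $\lesssim \diam(\Kset)$; the curve $\Gamma_{n+1}$ is obtained from $\Gamma_n$ by a family of pairwise-disjoint local modifications, one per point $y\in\net_{n+1}\setminus\net_n$, each supported in a ball of radius $\sim 2^{-n}$ and built along a horizontal line $L_y$ that nearly attains the infimum in the definition of $\beta_{\Kset,\H}(B(y,c 2^{-n}))$.

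The heart of the argument---and the main obstacle---is an \emph{incremental length lemma} bounding the length added by each insertion by $C(\ourpower)\,\beta^{\ourpower}\cdot 2^{-n}$, where $\beta=\beta_{\Kset,\H}(B(y,c 2^{-n}))$. In the Euclidean case, and in the analysis of \cite{FFP}, the analogous estimate holds with $\ourpower=2$ by a Pythagorean-type argument. The improvement to any $\ourpower<4$ should exploit the extra rigidity of horizontal lines in $\H$ (a horizontal line is a far more constrained object than an arbitrary one-dimensional subspace) together with a careful choice of detour: I would split the CC-distance from $y$ to $L_y$ into its horizontal and vertical components and construct a one-parameter family of detour paths that travel along $L_y$ for most of the available scale before diverting to $y$, optimizing over the parameter. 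The optimization should produce a bound of the form $\lesssim \beta^{\ourpower}\cdot 2^{-n}$ with constant blowing up as $\ourpower\uparrow 4$; this threshold $4$ matches the sharp exponent obtained in the companion paper \cite{Li-Schul-beta-leq-length} and the integrand $t^{-4}\,dt\,d\cH^4$ in the main theorem.

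Granted the incremental lemma the rest is reasonably standard. Summing over $y\in\net_{n+1}\setminus\net_n$ and telescoping in $n$ gives
\[
 \sup_n \cH^1(\Gamma_n) \;\lesssim\; \diam(\Kset)\;+\; \sum_{n\ge 0}\sum_{y\in\net_{n+1}\setminus\net_n} \beta_{\Kset,\H}\bigl(B(y,c 2^{-n})\bigr)^{\ourpower}\cdot 2^{-n}.
\]
Since $\net_{n+1}$ is $2^{-(n+1)}$-separated and $(\H,\cH^4)$ is $4$-Ahlfors regular, the balls $B(y,c 2^{-(n+1)})$ at a given scale are essentially disjoint with $\cH^4$-measure $\gtrsim 2^{-4n}$, so a standard comparison dominates the discrete double sum by $\int_{\H}\int_0^{\infty}\beta_{\Kset,\H}(B(x,t))^{\ourpower}\,t^{-4}\,dt\,d\cH^4(x)$ up to a constant. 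A Blaschke-type selection argument in the proper space $(\H,\dist)$ then extracts a Hausdorff limit $\Gamma$ of $(\Gamma_n)$; density of $\bigcup_n\net_n$ in $\Kset$ ensures $\Gamma\supset\Kset$, and lower semicontinuity of $\cH^1$ under Hausdorff convergence yields the length bound \eqref{e:length-beta-upper-bound-INT}.
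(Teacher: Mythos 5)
Your plan reproduces the outer scaffolding of the paper (nested nets, telescoping, Ahlfors-regular comparison of the discrete sum to the double integral), but the \emph{incremental length lemma} you put at the heart of the argument is false as stated, and the paper goes out of its way to show this. You propose a local bound of the form
\[
 d(a,b)+d(b,c)-d(a,c)\ \lesssim\ \beta_{\Kset,\H}(B)^{\ourpower}\diam(B)
\]
for well-separated $a,b,c\in\Kset\cap B$ and some $\ourpower>2$, to be proved by splitting the CC-distance to the approximating line into horizontal and vertical components and optimizing over detour paths. No such bound holds with any exponent above $2$. Take $a=(-1,0,0)$, $b=(0,0,\epsilon)$, $c=(1,0,0)$ in a ball of diameter comparable to $1$. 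The triangle excess $d(a,b)+d(b,c)-d(a,c)$ is $\gtrsim\epsilon^2$, yet the horizontal line $L=\{((1-\epsilon/2)t,(\epsilon/2)t,0):t\in\R\}$ satisfies $\max\{d(a,L),d(b,L),d(c,L)\}\lesssim\epsilon$, so $\beta_{\{a,b,c\},\H}(B)\lesssim\epsilon$. Letting $\epsilon\to 0$ kills any inequality of the form above with exponent $\ourpower>2$; the rigidity of horizontal lines that you hope to exploit is exactly what lets a short vertical deviation be approximated unusually well by a tilted horizontal line, which is the wrong direction for you. Your optimization over detours cannot repair this, because the excess $\epsilon^2$ is an intrinsic metric quantity independent of which curve you build.

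The paper's actual mechanism is structurally different and is worth internalizing. Rather than beating $\ourpower=2$ \emph{inside} the ball $B$, the paper shows (Proposition \ref{p:future-balls}) that whenever the triangle excess is large relative to $\beta_\H(B)^p$ and the set is sufficiently $\delta$-connected near $B$, there exists a possibly smaller and possibly displaced ball $F_1(B)$ with
\[
 d(a,b)+d(b,c)-d(a,c)\ \le\ C\,\beta_\H(F_1(B))^{p}\diam(F_1(B)),
\]
together with quantitative lower bounds on $\diam(F_1(B))$ and upper bounds on how far $F_1(B)$ sits from $B$. This shifts the cost onto a ball where the $\beta$-number is genuinely larger, and the telescoping sum is then controlled by bounding the multiplicity of the assignment $B\mapsto F_1(B)$ (a $\log(1/\beta)$ factor, absorbed by first proving the estimate for an exponent $p\in(\ourpower,4)$ and then relaxing). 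The poorly connected case is handled separately by charging length to pieces of the limit curve $\Gamma$ that lie a definite distance from $\Kset$. Both the $O(1)$ bound on the number of iterative ball searches and the removal of the $\log$ factor use $\ourpower<4$ in an essential way, which is why the paper cannot reach $\ourpower=4$. To fix your proposal you would need to replace the incremental lemma with this nonlocal redistribution of cost; the rest of your outline (net construction, telescoping, Hausdorff limit, lower semicontinuity of $\cH^1$) is compatible with that replacement.
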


We do not know if Theorem \ref{t:maintheorem-INT} is still true when $r = 4$, but we do know that it is almost optimal:  in \cite{Li-Schul-beta-leq-length} we showed the following.
\begin{maintheorem} \label{t:beta-leq-length}
There is a constant $C>0$ such that for any  rectifiable curve $\Gamma$
the following holds.
We have 
\begin{align}
\int\limits_{\H} \int\limits_0^{+\infty} \beta_{\Gamma,\H} (B(x,t))^4\frac{dt}{t^4}d\cH^4(x) 
\leq C\cH^1(\Gamma).  \label{e:sum-beta-upper-bound-INT}
\end{align}
\end{maintheorem}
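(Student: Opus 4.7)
The plan is to adapt Jones's multi-resolution / Carleson packing strategy for the Analyst's Traveling Salesman Theorem to the sub-Riemannian setting of $\H$. I would parametrize $\Gamma$ by arclength and first observe that $\beta_{\Gamma,\H}(B(x,t))$ vanishes unless $B(x,t)$ meets $\Gamma$, so the outer integration is restricted to a tube of radius $2t$ around $\Gamma$. A dyadic discretization of $t$ combined with the volume estimate $\cH^4(B(x,t)) \asymp t^4$ and a bounded-overlap covering at each scale converts the left-hand side of \eqref{e:sum-beta-upper-bound-INT} into a discrete sum of the form
\begin{align*}
  \sum_{k \in \Z} 2^{-k} \sum_{p \in \net_k} \beta_{\Gamma, \H}\bigl(B(p, c\, 2^{-k})\bigr)^4,
\end{align*}
where $\net_k$ is a maximal $c\, 2^{-k}$-separated subset of $\Gamma$.

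The core of the argument is a geometric deviation lemma: for each ball $B = B(p, t)$ centered on $\Gamma$ with flatness $\beta = \beta_{\Gamma, \H}(B)$, the arclength of the subarc of $\Gamma$ in $B$ exceeds the CC-distance between its endpoints by at least $c\, \beta^4 t$. The fourth power is the sub-Riemannian novelty: horizontal lines in $\H$ admit deviations in the central direction that only become detectable through enclosed horizontal area, and by the Heisenberg isoperimetric inequality (which bounds horizontal length from below by $\sqrt{\text{area}}$) combined with the CC-scaling $z \sim r^2$ of central distances, a deviation of CC size $\beta t$ in the central direction forces an excess of order $\beta^4 t$ rather than the Euclidean $\beta^2 t$. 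A careful Pansu-type linearization of the deviation is needed to isolate the horizontal and central contributions and to extract the sharp scaling.

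To globalize, I use a telescoping argument. Let $\Pi_k$ denote the polygonal path with vertices at $\net_k$ and edges running along $\Gamma$ itself between consecutive vertices; then $|\Pi_{k+1}| - |\Pi_k|$ is the total excess arclength between scales $2^{-k}$ and $2^{-k-1}$. Applying the deviation lemma at scale $k$ and a bounded-multiplicity estimate on $\net_k$ yields
\begin{align*}
  2^{-k} \sum_{p \in \net_k} \beta_{\Gamma,\H}\bigl(B(p, c\, 2^{-k})\bigr)^4 \;\le\; C \bigl( |\Pi_{k+1}| - |\Pi_k| \bigr),
\end{align*}
and telescoping in $k$ gives the desired bound $\le C\,\cH^1(\Gamma)$.

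The main obstacle is the sharp $\beta^4 t$ deviation lemma. A naive Pythagorean argument would yield only the Euclidean $\beta^2 t$ and would not account for the fact that $\H$ admits configurations with $\beta \sim \beta_0$ whose excess arclength is only of order $\beta_0^4 t$, corresponding to central-direction deviations that close up via small horizontal area. Capturing this precisely via the Heisenberg isoperimetric inequality and the quadratic relationship between CC-distance and central-coordinate drop is the technical heart of the theorem; once it is in place, the Fubini / telescoping framework follows the pattern of the Euclidean proof adapted to the $4$-dimensional measure $\cH^4$ and to CC-balls.
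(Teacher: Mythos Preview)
The paper you are working from does \emph{not} prove Theorem~\ref{t:beta-leq-length}. It is stated there only as a result quoted from the companion paper \cite{Li-Schul-beta-leq-length}; everything after the introduction is devoted to the converse direction, Theorem~\ref{t:maintheorem-INT}. So there is no ``paper's own proof'' of this statement to compare against, and your write-up should make clear that you are sketching the argument of the companion paper rather than of this one.

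On the substance of your sketch: the overall architecture (reduce to a dyadic Carleson sum over nets on $\Gamma$, prove a Heisenberg deviation estimate with exponent $4$, telescope over scales) is indeed the strategy of \cite{Li-Schul-beta-leq-length}, and your heuristic for why the exponent is $4$ rather than $2$ --- vertical displacement $z$ costs CC-distance $\sim z^{1/2}$, while the horizontal area needed to reach height $z$ forces only quadratically small excess length in the projected curve --- is the right intuition. Two points need correction, however. First, your description of $\Pi_k$ as having ``edges running along $\Gamma$ itself between consecutive vertices'' makes $|\Pi_k|=\cH^1(\Gamma)$ for every $k$, so nothing telescopes; you want geodesic (or at least chordal) edges between net points, so that $|\Pi_{k+1}|-|\Pi_k|$ is a genuine sum of triangle excesses. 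Second, and more seriously, the single-ball deviation lemma ``arclength excess in $B$ is at least $c\,\beta_{\Gamma}(B)^4\,t$'' cannot be fed into the telescoping as you describe: the telescoping increments are triangle excesses $d(a,b)+d(b,c)-d(a,c)$ for three \emph{net points}, and the very example in the introduction of this paper ($a=(-1,0,0)$, $b=(0,0,\epsilon)$, $c=(1,0,0)$) shows that three points can have triangle excess $\sim\epsilon^2$ while sitting at distance $\sim\epsilon$ from a horizontal line --- so the three-point $\beta$ can be far smaller than $\beta_\Gamma$ on the same ball. The companion paper handles this by tracking horizontal and vertical components of the deviation separately across scales (a parametric argument), not by a clean single-scale inequality; your sketch elides exactly the place where the work is.
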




The use of Hausdorff measure of dimension 4 directly corresponds to the Hausdorff dimension of $\H$ and the  power of $t$.  However, it does not correspond to the power 4 of $\beta$.  That 4 comes from the modulus of curvature coming directly from the Heisenberg geometry.  
In an $n$-dimensional Euclidean space,
the analogous theorems hold with the same power $r=2$ for both Theorem \ref{t:maintheorem-INT} and \ref{t:beta-leq-length}, and the power of $t$ as well as the Hausdorff measure dimension are $n$ \cite{Jones-TSP,  Ok-TSP}.
If one discretizes the integral to a sum in an appropriate way, the same holds for an infinite dimensional Hilbert space \cite{Schul-TSP}, again, with $r=2$.  In a general metric space, with no assumptions on the set $\Kset$, there is an analogue of Theorem \ref{t:maintheorem-INT} \cite{Ha-non-AR}, however the analogue of Theorem \ref{t:beta-leq-length} is false in that setting \cite{Schul-survey}.  If one adds the assumption that $\Kset$ is $1$-Ahlfors-regular (i.e. it supports a measure with linear upper and lower bounds on its growth) 
then, in this general metric setting analogues of both  Theorem \ref{t:maintheorem-INT} and \ref{t:beta-leq-length} hold \cite{Ha-AR, Schul-AR}. Finally we note  that extensive work has been done on the Euclidean case, where one approximates with $k$-planes instead of  lines.

In Euclidean space, there is a very deep connection between  how well $k$-dimensional sets are approximated by $k$-planes (in a sense analogous to the right hand side of \ref{t:beta-leq-length}, with $r=2$ and appropriately scaled) and singular integral operators. See for example \cite{Pajot-book, DS91, DS93, Tolsa} and references therein.  The heuristic point (that has  rigorous meaning as well) is that the multi-scale approximations by $k$-planes are analogous to a multi-scale decomposition of a function (or its derivative) into a wavelet basis and is a measurement of how fast one approaches a tangent.  In a ball, one  approximates a singular integral against a measure supported on a set by a similar singular integral, against a measure supported on an appropriate  $k$-dimensional hyperplane, and sums over balls at all scales and locations.  It would be interesting to explore the connection between  our results and singular integrals on one dimensional subsets of the Heisenberg group.    

Naturally, the case  $k=1$ allows more results in Euclidean space, and this is the case which makes sense in the Heisenberg group due to the lack of rectifiable surfaces. One may view our results as answering the question `Give necessary and sufficient conditions for a collection of sites to be visited by a curve of finite length in the Heisenberg group'.  Once can ask further questions like `Can you connect the sites by a finite length curve such that movement between the different sites is efficient' as was done in \cite{AS-shortcuts}, where it was shown that any rectifiable curve $\Gamma$  is contained inside a quasiconvex rectifiable curve $\tilde\Gamma$ of comparable length. Such questions, if generalized appropriately, could end up having applications.

The curve in Theorem \ref{t:maintheorem-INT} may be  constructed in an algorithmic way.  In fact, up to a natural modification of the constants used (and the metric), this curve is constructed in \cite{FFP}. The authors there get the estimate \eqref{e:length-beta-upper-bound-INT} for $\ourpower\leq 2$. 
Several years after \cite{FFP} was published, an example was constructed of a finite length curve such that the right hand side of \eqref{e:length-beta-upper-bound-INT}, with $\ourpower\leq2$, is infinite \cite{juillet}.
With a minor modification (taking the parameters of the construction to be $\theta_k = \frac{c}{k^p}$ for $p > 1/2$ instead of $p = 1$), this example remains a valid counterexample for all $\ourpower<4$ and so suggests that one may be able to improve on \cite{FFP}.
Indeed our contribution in this note  is to improve upon the estimates of \cite{FFP} in a non-trivial fashion, and as a consequence to extend the range of valid $\ourpower$ by adding the range $(2,4)$.  The fact that one may improve upon the power $\ourpower=2$ came from \cite{li-carnot}, which gave a parametric version of Theorem \ref{t:beta-leq-length} with power different from 2.  In the same paper, it was also shown that the parametric analogue of $\ourpower$ is related to the Markov convexity of $\H$, which was recently calculated to be 4 in \cite{li-heisenberg}.

Previous proofs of statements like Theorem \ref{t:maintheorem-INT} used the farthest insertion algorithm along with a curvature inequality of the form
\begin{align}
  d(a,b) + d(b,c) - d(a,c) \leq C \beta(B)^p \diam(B) \label{e:curvature}
\end{align}
where $B$ is some ball of radius $r$ and $a,b,c \subset E \cap B$ are points whose mutual distances are all comparable to $r$.  Here, $\beta(B)$ depends on the context of the problem.

The curve is then built in an iterative fashion as  in \cite{Jones-TSP}.  Letting $\Delta_k$ be a $2^{-k}$-net of $\Kset$ that was connected by a polygonal curve $\Gamma_k$, one builds $\Gamma_{k+1}$ by adding and deleting segments of $\Gamma_k$ in a clever way to include $\Delta_{k+1}$.  
The  inequality  \eqref{e:curvature} enables one to bound the sum of the telescoping series $\ell(\Gamma_{k+1}) - \ell(\Gamma_k)$ to be bounded by the Carleson sum of the $\beta$-numbers.  The curve $\Gamma$  is the  limit (along a subsequence) of the curves $\Gamma_j$, ensured by Arzel\`a-Ascoli.
For a detailed account of this see the appendix of \cite{FFP}, which also explains why we freely confuse a connected set of finite length  with a parametrized curve.

In \cite{FFP}, the authors showed that the $r = 2$ version of Theorem \ref{t:maintheorem-INT} is true by proving the inequality
\begin{align}
  d(a,b) + d(b,c) - d(a,c) \leq C \beta_\H(B)^2 \diam(B) \label{e:FFP-curvature}
\end{align}
and then applying the farthest insertion algorithm with \eqref{e:FFP-curvature}.  Here, $a,b,c$ are points in $\Kset \cap B$ that are spread out far enough.  Thus, the first obvious attempt would be to improve this inequality to get a higher power of the $\beta_\H(B)$.  However, this is impossible.  We discuss the geometry of the Heisenberg group in the next section, so it may be useful to review that before reading on.

Consider the points $a = (-1,0,0)$, $b = (0,0,\epsilon)$, $c = (1,0,0)$.  It is straightforward to calculate that there exists some absolute constant $c_1 > 0$ so that
\begin{align*}
  d(a,b) + d(b,c) - d(a,c) \geq c_1 \epsilon^{2}.
\end{align*}
However, one can see that if we let $L = \left\{\left( \left(1- \frac{\epsilon}{2}\right)t, \frac{\epsilon}{2} t,0\right) : t \in \R\right\}$ be a horizontal line, then there exists some absolute constant $c_2 > 0$ so that
\begin{align*}
  \max\{ d(a,L), d(b,L), d(c,L) \} \leq c_2 \epsilon.
\end{align*}
This is because if we set $a' = \left(1 - \frac{\epsilon}{2}, \frac{\epsilon}{2}, 0\right), c' = \left( \frac{\epsilon}{2} - 1, -\frac{\epsilon}{2},0\right)$ to be points in $L$, then $\max\{ d(a,a'), d(c,c') \} < c_2 \epsilon$.  Thus, by taking $\epsilon \to 0$, we see that the power 2 in \eqref{e:FFP-curvature} cannot be improved, at least in this generality.

What we will show is that for any $p < 4$, ball $B \subset \H$, and points $a,b,c \in \Kset \cap B$ that are well spread out, if $\Kset \cap B$ is sufficiently connected (the exact condition is given in the assumptions of Proposition \ref{p:future-balls}), there is some other ball $F_1(B) \subset \H$ for which
\begin{align}
  d(a,b) + d(b,c) - d(a,c) \leq C_p \beta_\H(F_1(B))^p \diam(F_1(B)). \label{e:future-balls-1}
\end{align}
This ball $F_1(B)$ may be smaller than $B$ and may not actually be contained in $B$, but it cannot be too much smaller and cannot be too far away from $B$. In fact, the radius of $F_1(B)$ is controlled from below by a function of $\beta_\H(B)$ (times that of $B$), and its distance from $B$ is a multiple of the diameter of $B$. Thus, we can control how many times each ball $B'$ is used as a $F_1(B)$.  In the case when $\Kset$ is not sufficiently connected enough to use \eqref{e:future-balls-1}, then the disconnection will enable us to use an accounting trick to show that \eqref{e:FFP-curvature} is sufficient to handle this case.  This search for balls with better $\beta$ properties deviates from the proofs of previous versions of Theorem \ref{t:maintheorem-INT} and is what enables us to improve on the $\ourpower = 2$ power despite not being able to improve upon \eqref{e:FFP-curvature}.

An obvious question is whether Theorem \ref{t:maintheorem-INT} is true for $r = 4$.  The fact that $r < 4$ is crucially used in two parts of the construction of this paper.  In finding $F_1(B)$, one has to do iterative searches for balls with progressively better properties.  That $r < 4$ guarantees that we only need to do $O(1)$ searches.  This allows us to control the constant in \eqref{e:future-balls-1}.  
The condition  $r < 4$ is also used to control the number of balls
$\{B': F_1(B')=B\}$ for each ball $B$.  We get that this number depends (linearly) on the logarithm of  $\beta_\H(B)$.   These logarithmic factors initially come into the estimates as multiplicative factors, but having  $r < 4$ allows us to remove these log terms by first proving the statement for some power $r' \in (r,4)$ and then relaxing the power to $r$.

Thus, we leave unanswered the question {\it Is there a constant $C=C(4)$ so that \eqref{e:length-beta-upper-bound-INT} holds for $\ourpower=4$?}

In the next section, we review some facts about the Heisenberg group and establish the notation for the rest of the paper.  In section 3, we will prove the lemmas needed for our main proposition, the proof of which will be given in section 4.  In section 5, we show how to adapt the construction of \cite{FFP} to use the proposition of Section 4 to prove Theorem \ref{t:maintheorem-INT}.

\section{Preliminaries}
Following \cite{FFP}, we will say that the Heisenberg group is the 3-dimensional Lie group $\H = (\R^3,\cdot)$ where the group multiplication is
\begin{align*}
  (x,y,z) \cdot (x',y',z') = \left(x + x', y+y', z+z' + 2 (xy' - x'y) \right).
\end{align*}
One sees then that $(0,0,0)$ is still the identity and we will refer to it as 0.

There is a natural path metric on $\H$ that we now describe.  Using the fact that group multiplication is smooth, we can define $H\H$, a left-invariant subbundle of the tangent bundle $T\H = T\R^3$ so that $H_0\H$ is the $xy$-plane in $\R^3$ and $H_g\H = (L_g)_*H_0\H$ for $g \in \H$ where $L_g$ is the smooth $\H \to \H$ map that is left multiplcation by $g$.  We can similarly use $L_g$ to endow $H\H$ with a left-invariant field of inner products $\{\langle \cdot, \cdot \rangle_g\}_{g \in \H}$.  The normalization is usually that the $x$ and $y$ unit vectors of $H_0\H$ are orthogonal under $\langle \cdot,\cdot \rangle_0$, but this is not too important.  Given $x,y \in \H$, we can now define the Carnot-Carath\'eodory distance between $x$ and $y$ to be
\begin{align*}
  d_{cc}(x,y) := \inf \left\{ \int_a^b \langle \gamma'(t),\gamma'(t) \rangle_{\gamma(t)}^{1/2} ~dt : \gamma \in C^1([a,b];\H), \gamma(a) = x, \gamma(b) = y, \gamma'(t) \in \Delta_{\gamma(t)} \right\}.
\end{align*}
A natural question to ask is whether the set of curves in the definition of the Carnot-Carath\'eodory metric is always nonempty given any two points $a,b \in \H$.  It is known that such $C^1$ curves always exist in $\H$ (see {\it e.g.} \cite{montgomery}).  Continuous paths that satisfy $\gamma'(t) \in \Delta_{\gamma(t)}$ (almost everywhere) are called {\it horizontal paths}.  As we are taking the Riemannian distance in a subclass of paths connecting two points, this geometry is sometimes called subriemannian geometry.

It is well known that a horizontal curve $\gamma = (\gamma_x,\gamma_y,\gamma_z) : I \to \H$ satisfies the following identity
\begin{align*}
  \gamma_z(b) - \gamma_z(a) - 2 \left( \gamma_x(a) \gamma_y(b) - \gamma_x(b) \gamma_y(a) \right) = 2 \int_a^b (\gamma_x(t) \gamma_y'(t) - \gamma_x'(t) \gamma_y(t)) ~dt, \qquad \forall a,b \in I.
\end{align*}
Thus, the change in $z$-coordinate of a horizontal curve as viewed in the group product is equal to four times the algebraic area swept by $(\gamma_x,\gamma_y)$ when viewed as a curve in $\R^2$.

While the Carnot-Carath\'eodory metric is a well defined path metric, it is not so easy to explicitly compute Carnot-Carath\'eodory distances between points.  Instead, we will work with the equivalent Koranyi metric (or Koranyi distance), for which distances are easier to compute.  We define the Koranyi norm as
\begin{align*}
  N : \H &\to \R \\
  (x,y,z) &\mapsto ((x^2 + y^2)^2 + z^2)^{1/4}.
\end{align*}
It then defines a left-invariant metric on $\H$ via $d(g,h) = N(g^{-1}h)$.  It is well known that this is a metric ({\it i.e.} satisfies the triangle inequality) and is biLipschitz equivalent to the Carnot-Carath\'eodory metric \cite{cygan}.  We have that $d(x,y) \leq d_{cc}(x,y)$ as $d(x,y) = d_{cc}(x,y)$ whenever $x$ and $y$ are on a horizontal line and so the Koranyi norm does not decrease length in horizontal paths.  For simplicity, we will assume the non-sharp (see (1.4) in \cite{balogh2006lifts}) lower bound
\begin{align}
  d(x,y) \geq \frac{1}{2} d_{cc}(x,y). \label{e:cc-lower-bound}
\end{align}

An important property of $\H$ is that it admits a family of dilation automorphisms.  Specifically, for each $\lambda > 0$, we can define the automorphism
\begin{align*}
  \delta_\lambda : \H &\to \H \\
  (x,y,z) &\mapsto (\lambda x, \lambda y, \lambda^2 z).
\end{align*}
These dilations scale the metric, that is, $d_{cc}(\delta_\lambda(x),\delta_\lambda(y)) = \lambda d_{cc}(x,y)$.  This can be verified by looking at the Jacobian of $\delta_\lambda$ and the remembering how the Carnot-Carath\'eodory metric is defined.  It is immediately verified by looking at the expression of the Koranyi norm that $\delta_\lambda$ also scales the Koranyi metric.

Rotations around the $z$-axis comprise a set of isometric automorphisms of $\H$.  This follows from looking at the formulas of the Koranyi norm and group multiplication and seeing that $xy' - x'y$ is just a cross product, which is invariant under rotations.

The Heisenberg group is known to be geometrically doubling.  That is, there exists some $M > 0$ so that any ball $B(x,r) \subset \H$ can be covered by $M$ balls of radius $\frac{r}{2}$.  The point is that $M$ can be chosen uniformly for all $x \in \H$ and $r > 0$.  This follows from the fact that the Lebesgue measure on $\R^3$ a Haar measure of $\H$ and balls of $\H$ grow like $r^4$.  A standard volume packing argument then gives that $\H$ is geometrically doubling.  That the Lebesgue measure on $\R^3$ is a Haar measure for $\H$ can be seen from the fact that the linear parts of the affine transforms in $\R^3$ that correspond to group translations in $\H$ are volume preserving.  The growth of balls comes from the anisotropic scaling of the dilations.

Another important feature of the Heisenberg group is the existence of a distinguished family of curves called {\it horizontal lines}.  Before we define horizontal lines, we first define horizontal elements of $\H$.  An element $g \in \H$ is said to be horizontal if $g$ lies on the $xy$-plane.  For such horizontal elements, we can extend $\delta_\lambda$ to all $\lambda \in \R$ to get $\delta_t(x,y,0) = (tx,ty,0)$.  The horizontal lines are subsets of the form $\{g \cdot \delta_t(h) : t \in \R\}$ where $g,h \in \H$ and $h$ is horizontal.  Horizontal lines essentially amount to horizontal curves where the tangent vector stays the same.  Note that the set of horizontal lines going through a specified point in $\H$ spans two dimensions instead of three as in $\R^3$.  This shows that most pairs of points in $\H$ cannot be joined by a horizontal line, which reflects a crucial difference between Heisenberg and Euclidean geometry.

We define the homomorphic projection
\begin{align*}
  \pi : \H &\to \R^2 \\
  (x,y,z) &\mapsto (x,y).
\end{align*}
It is immediately verifiable that this is 1-Lipschitz by looking at the Koranyi norm.  We also define the maps
\begin{align*}
  \tilde{\pi} : \H &\to \H \\
  (x,y,z) &\mapsto (x,y,0)
\end{align*}
and 
\begin{align*}
  NH : \H &\to \H \\
  g &\mapsto N(g^{-1} \tilde{\pi}(g)).
\end{align*}
Note that $\tilde{\pi}$ is not a homomorphism.  The map $NH$ gives a measurement of how nonhorizontal an element $g \in \H$ is by computing the distance of $g$ to the horizontal element ``below'' it.

Given subsets $K \subseteq L$ of any metric space $(X,d_X)$ and $\delta > 0$, we say $K$ is 
{\it $\delta$-connected}
 in $L$ if for each $x,y \in K$, there exists a finite sequence $\{z_i\}_{i=1}^n \subset L$ for which $z_1 = x$ and $z_n = y$ so that $d_X(z_i,z_{i+1}) < \delta$.

Finally, we recall that a  curve is a continuous map $\gamma$ whose domain is  an interval $I\subset \R$. If  $\gamma$ has finite arclength then its image is called a {\it rectifiable curve}.
It is a standard result that a connected set $\Gamma$ in a doubling metric space, which satisfies $H^1(\Gamma)<\infty$ is a rectifiable curve.  See for example the appendix of \cite{FFP}.

\section{Lemmas: Future balls}
Given a point $p \in \H$ and a horizontal line $L \subset \H$, if $\pi(p) \notin \pi(L)$, we let $p_L \in \H$ denote the point that is co-horizontal with $p$ ({\it i.e.} $NH(p_L^{-1}p) = 0$) so that $\pi(p_L) \in \pi(L)$ and the line in $\R^2$ spanned by $\pi(p)$ and $\pi(p_L)$ is perpendicular to $\pi(L)$.  If $\pi(p) \in \pi(L)$, then $p_L = p$.  It is easy to see that $d(p_L,L)$ scales like the square root of the $z$-distance from $p_L$ to $L$.  We have the following lemma.

\begin{lemma} \label{l:shortest-to-line}
  \begin{align*}
    \frac{1}{2} \left( d(p,p_L)^4 + d(p_L,L)^4 \right)^{1/4} \leq d(p,L) \leq 2 \left( d(p,p_L)^4 + d(p_L,L)^4 \right)^{1/4}.
  \end{align*}
\end{lemma}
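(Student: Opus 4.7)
The plan is to reduce to an explicit coordinate computation by exploiting the left-invariance of the Koranyi metric together with the invariance of $\H$ under rotations about the $z$-axis. After left-translating so that $L$ passes through the origin and rotating so that $L$ is the $x$-axis $\{(t,0,0):t\in\R\}$, a point $p=(x,y,z)$ has $\pi(p_L)=(x,0)$ by the perpendicularity condition, and the horizontality condition $NH(p_L^{-1}p)=0$ pins down $p_L=(x,0,z_0)$ with $z_0:=z-2xy$. Direct computation of $p_L^{-1}p$ and of $q^{-1}p_L$ for $q\in L$ then yields
\begin{align*}
  d(p,p_L)=|y|, \qquad d(p_L,L)=|z_0|^{1/2},
\end{align*}
so the right-hand quantity in the lemma is $(y^4+z_0^2)^{1/4}$. (The case $\pi(p)\in\pi(L)$ corresponds to $y=0$, where $p_L=p$ and the lemma is trivial.)

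The upper bound follows from the triangle inequality $d(p,L)\le d(p,p_L)+d(p_L,L)\le |y|+|z_0|^{1/2}$ combined with the elementary inequality $(a+b)^4\le 8(a^4+b^4)$ (a Jensen-type bound). This yields $d(p,L)^4\le 8(y^4+z_0^2)$, which is in fact stronger than the factor $16$ claimed.

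For the lower bound, parametrize $L$ by $q=(t,0,0)$ and write $s=x-t$. Then $q^{-1}p=(s,y,z-2ty)$ and a short calculation gives
\begin{align*}
  d(p,L)^4=\inf_{s\in\R}f(s),\qquad f(s):=(s^2+y^2)^2+(z_0+2sy)^2.
\end{align*}
Since $(s^2+y^2)^2\ge y^4$, the $y^4$ contribution is immediate. For the $z_0^2$ contribution I will exploit the AM-GM inequality $2|sy|\le s^2+y^2$ and split into two cases. If $s^2+y^2\ge |z_0|/2$ then the first term alone gives $(s^2+y^2)^2\ge z_0^2/4$; otherwise $|2sy|\le s^2+y^2<|z_0|/2$, so the ordinary triangle inequality in $\R$ forces $|z_0+2sy|\ge |z_0|/2$ and the second term gives $(z_0+2sy)^2\ge z_0^2/4$. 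Either way $f(s)\ge \max(y^4,\,z_0^2/4)\ge \tfrac12(y^4+z_0^2/4)\ge \tfrac18(y^4+z_0^2)$, whence $d(p,L)\ge 8^{-1/4}(y^4+z_0^2)^{1/4}$, comfortably better than the $\tfrac12$ constant required. The only nontrivial ingredient is this case analysis; everything else is bookkeeping in the Koranyi norm, so I do not anticipate a serious obstacle.
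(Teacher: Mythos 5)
Your proof is correct and follows essentially the same route as the paper: normalize by translation and rotation so that $L$ is the $x$-axis, reduce $d(p,L)^4$ to an explicit one-variable minimization of a quartic, read off the $y^4$ term for free, and handle the $z$-term by a two-case argument. The only cosmetic differences are that the paper further translates so that the $x$-coordinate of $p$ vanishes (making $z_0=z$), and it phrases the dichotomy as $t\in(z/4y,3z/4y)$ or not rather than your AM-GM split on $s^2+y^2$ versus $|z_0|/2$; both capture the same obstruction and give the stated constants.
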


\begin{proof}
  The inequality on the right hand side is simply the triangle inequality along with Jensen's inequality.

  By a rotation and translation, we may suppose that $L$ is the $x$-axis and the $x$ coordinate of $p$ is 0.  Thus, $p = (0,y,z)$ and $p_L = (0,0,z)$.  We have that
  \begin{align*}
    d(p_L,L) = \inf_{t \in \R} (t^4 + z^2)^{1/4} = |z|^{1/2}.
  \end{align*}
  As $d(p,p_L) = |y|$, we see that we get the left hand inequality if we show for all $t \in \R$ that
  \begin{align*}
    f(t) := d((t,0,0),(0,y,z))^4 = (t^2 + y^2)^2 + (z - 2ty)^2 \geq \frac{1}{16} \left( y^4 + z^2 \right).
  \end{align*}
  Note that $f(t) \geq y^4$ always.

  We also have that
  \begin{align*}
    f(t) \geq ( z - 2ty )^2
  \end{align*}
  and so $f(t) \geq \frac{1}{4} z^2$ unless $t \in \left( \frac{z}{4y}, \frac{3z}{4y} \right)$.  But if $t$ is in this regime, then
  \begin{align*}
    f(t) \geq (y^2 + t^2)^2 \geq 2t^2y^2 \geq \frac{1}{8} z^2.
  \end{align*}
  Thus, $f(t) \geq \frac{1}{8}z^2$ always and so
  \begin{align*}
    f(t) \geq \frac{1}{2} \left(y^4 + \frac{1}{8}z^2\right) \geq \frac{1}{16} \left( d(p,p_L)^4 + d(p_L,L)^4 \right).
  \end{align*}
\end{proof}

We have made no effort to optimize constants in this lemma.  It will not be necessary to use optimal constants in this paper.

Given a horizontal line $L$ and a point $p \in \H$, we let $P_L(p)$ denote the point in $L$ that is just a vertical translate of $p_L$.  Sometimes we will abuse notation and treat $P_L(p)$ as a point in $\R$ corresponding to the linear isometry of $L$ with $\R$.  It will be clear by context whether we mean $P_L(p)$ as a point in $\H$ or a point in $\R \cong L$.

Note that if $\pi(p) \in \pi(L)$, then $d(p,L) = d(p,P_L(p))$.  Indeed, we may assume $p = 0$ in which case $L$ is a horizontal line going through the $z$-axis.  The statement then follows as the metric balls of the  Koranyi norm are convex bodies that are symmetric about the $z$-axis.  Thus, we get from Lemma \ref{l:shortest-to-line} that
\begin{align}
  d(p,P_L(p)) \leq d(p,p_L) + d(p_L,L) \leq 2^{3/4} (d(p,p_L)^4 + d(p_L,L)^4)^{1/4} \leq 4 d(p,L). \label{e:closest-to-line}
\end{align}

Let $a,b \in \H$.  We let $\Sigma_{a,b}$ denote the algebraic area of the closed path in $\R^2$ that comes from the projection to $\R^2$ of any horizontal path in $\H$ connecting $a$ to $b$ (so in $\R^2$ it goes from $\pi(a)$ to $\pi(b)$) and then subsequently going back to $\pi(a)$ via a straight line.  Note by Heisenberg geometry that the vertical coordinate of $a^{-1}b$ is exactly $\Sigma_{a,b}$ and so we get that $\Sigma_{a,b}$ is in fact independent of the chosen horizontal path in $\H$.  Thus, we also see that
\begin{align}
  NH(a^{-1}b) = 2|\Sigma_{a,b}|^{1/2}. \label{NH-Sigma}
\end{align}

Given a horizontal line $L \subset \H$, we let $\Sigma^L_{a,b}$ denote the algebraic area of the following closed path in $\R^2$ (with the specified orientation) that we describe.  It first goes from $\pi(a_L)$ to $\pi(a)$ by a straight line.  Then it follows the projection to $\R^2$ of any horizontal path in $\H$ connecting $a$ to $b$.  It then goes from $\pi(b)$ to $\pi(b_L)$ via a straight line before finally going back to $\pi(a_L)$ via another straight line.  We easily see that $\Sigma^L_{a,b}=\Sigma_{a,b} + T$, where $T$ is the algebraic area of the trapezoid in $\R^2$ that starts at $\pi(a)$ and goes to $\pi(b)$, $\pi(b_L)$, $\pi(a_L)$, before finally going back to $\pi(a)$.  Thus, we see that $\Sigma_{a,b}^L$ is also independent of the chosen horizontal curve in $\H$.

Note that the path we constructed above for $\Sigma^L_{a,b}$ is the projection to $\R^2$ of a horizontal path in $\H$ that goes from $a_L$ to $b_L$ before going back to $\pi(a_L)$ via a straight line in $\R^2$.  Thus, this path is a valid path for computing $\Sigma_{a_L,b_L}$.  We then have that
\begin{align}
  NH(a_L^{-1}b_L) \overset{\eqref{NH-Sigma}}{=} 2|\Sigma_{a_L,b_L}|^{1/2} = 2|\Sigma^L_{a,b}|^{1/2} \label{e:z-change}
\end{align}
for any $a,b \in \H$ and horizontal line $L \subset \H$.  A very useful property of $\Sigma^L_{a,b}$ is that it is additive.  That is, for $a,b,c \in \H$, we have
\begin{align*}
  \Sigma^L_{a,c} = \Sigma^L_{a,b} + \Sigma^L_{b,c}.
\end{align*}

\begin{lemma} \label{l:trap-unaffine}
  If $a,b \in \H$ and $L \subset \H$ is a horizontal line, then
  \begin{align*}
    \max\{d(a,L), d(b,L)\} \geq \frac{1}{2} |\Sigma_{a,b}^L|^{1/2}.
  \end{align*}
\end{lemma}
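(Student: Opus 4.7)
The plan is to reduce $d(a,L)$ and $d(b,L)$ to the vertical gaps $d(a_L,L)$ and $d(b_L,L)$, and then use \eqref{e:z-change} to relate the latter to $\Sigma_{a,b}^L$.

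First, applying the left-hand inequality of Lemma \ref{l:shortest-to-line} and discarding the $d(p,p_L)^4$ summand yields $d(p,L) \geq \tfrac{1}{2} d(p_L,L)$, so it suffices to prove
\[
\max\{d(a_L, L),\, d(b_L, L)\} \,\geq\, |\Sigma_{a,b}^L|^{1/2}.
\]

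Next, I factor $a_L^{-1} b_L = g_1 g_2 g_3$, where $g_1 = a_L^{-1} P_L(a)$, $g_2 = P_L(a)^{-1} P_L(b)$, and $g_3 = P_L(b)^{-1} b_L$. Because $a_L$ and $P_L(a)$ share the same projection to $\R^2$ (and likewise for $b_L, P_L(b)$), both $g_1$ and $g_3$ are pure vertical elements, of the form $(0,0,\alpha)$ and $(0,0,\beta)$ respectively. Moreover, since $\pi(a_L)\in\pi(L)$, the paper's observation that $d(p,L)=d(p,P_L(p))$ whenever $\pi(p)\in\pi(L)$ gives $|\alpha|^{1/2} = d(a_L,P_L(a)) = d(a_L,L)$, and similarly $|\beta|^{1/2} = d(b_L,L)$. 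On the other hand, $g_2$ is horizontal: it represents a displacement along the horizontal line $L$ and hence lies in the $xy$-plane. A direct calculation with the Heisenberg product formula shows that sandwiching the horizontal element $g_2 = (h_1,h_2,0)$ between the two pure vertical elements just adds their vertical coordinates, so
\[
a_L^{-1} b_L \,=\, (h_1,\, h_2,\, \alpha + \beta), \qquad NH(a_L^{-1} b_L) = |\alpha+\beta|^{1/2}.
\]

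Combining with \eqref{e:z-change} then gives $4|\Sigma_{a,b}^L| = |\alpha+\beta| \leq |\alpha| + |\beta| \leq 2\max\{d(a_L,L)^2, d(b_L,L)^2\}$. Taking square roots and composing with the factor $\tfrac{1}{2}$ from the first step yields
\[
\max\{d(a,L),\, d(b,L)\} \,\geq\, \tfrac{1}{\sqrt{2}}\, |\Sigma_{a,b}^L|^{1/2} \,\geq\, \tfrac{1}{2}\, |\Sigma_{a,b}^L|^{1/2},
\]
as required. The main bookkeeping obstacle is verifying the three-factor computation: one has to check that the cross terms in the Heisenberg group law vanish (because the vertical pieces $g_1, g_3$ have no horizontal components), so that vertical coordinates of the three factors simply add to give $\alpha+\beta$. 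Everything else is routine.
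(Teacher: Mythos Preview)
Your proof is correct and follows essentially the same approach as the paper's. Both arguments reduce to $d(a_L,L)$ and $d(b_L,L)$ via Lemma~\ref{l:shortest-to-line}, then factor $a_L^{-1}b_L$ as a horizontal displacement along $L$ sandwiched between two central (purely vertical) elements so that the vertical coordinates add; your $g_1,g_3$ are exactly the paper's $y^{-1},z$, and your computation $NH(a_L^{-1}b_L)=|\alpha+\beta|^{1/2}$ is the same as the paper's $NH(a_L^{-1}b_L)=d(y,z)$.
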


\begin{proof}
  Suppose that $\max\{ d(a,L), d(b,L) \} \leq \epsilon$.  Then by Lemma \ref{l:shortest-to-line}, we have that $\max\{d(a_L,L), d(b_L,L)\} \leq 2 \epsilon$.  Then we can write $a_L = g y$ and $b_L = h z$ for some $g,h \in L$ and $y,z \in Z(\H)$ for which $\max\{d(y,0),d(z,0)\} \leq 2\epsilon$.  It follows that $\tilde{\pi}(a^{-1}b) = g^{-1}h$.  As $y,z$ commute with all elements of $\H$, we get from \eqref{e:z-change} that
  \begin{align*}
    |\Sigma_{a,b}^L|^{1/2} = \frac{1}{2}NH(a_L^{-1}b_L) = \frac{1}{2} d(y^{-1} g^{-1}h z, g^{-1}h) = d(g^{-1}h y^{-1}z, g^{-1}h) = \frac{1}{2} d(y,z) \leq 2 \epsilon.
  \end{align*}
\end{proof}

\begin{lemma} \label{l:NH-min-beta}
  For every $a,b \in \H$ and every horizontal line $L \subset \H$ we have
  \begin{align*}
    \max\{d(a,L), d(b,L)\} \geq \frac{1}{16} \frac{NH(a^{-1}b)^2}{d(a,b)}.
  \end{align*}
\end{lemma}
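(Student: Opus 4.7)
The plan is to exploit the identity $\Sigma^L_{a,b} = \Sigma_{a,b} + T$ (from the discussion preceding Lemma~\ref{l:trap-unaffine}) and to split into cases depending on whether the trapezoidal correction $T$ dominates $\Sigma_{a,b}$ or not.

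For the setup, I would use the rotational and translational isometries of $\H$ to place $L$ in standard position as the $x$-axis. Writing $a = (x_a, y_a, z_a)$ and $b = (x_b, y_b, z_b)$, a direct computation gives $a_L = (x_a, 0, z_a - 2 x_a y_a)$, hence $d(a, a_L) = |y_a|$ (and likewise for $b$); Lemma~\ref{l:shortest-to-line} then yields $d(a, L) \ge \tfrac12 |y_a|$. The shoelace formula applied to the trapezoid with vertices $\pi(a), \pi(b), \pi(b_L), \pi(a_L)$ gives the clean factorization $T = \tfrac12 (x_a - x_b)(y_a + y_b)$, and $|x_a - x_b| \le d(a, b)$ because $\pi$ is $1$-Lipschitz into $\R^2$. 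Finally, a glance at the Koranyi formula shows $NH(g) = |z|^{1/2} \le N(g)$, so $NH(a^{-1}b) \le d(a, b)$, and \eqref{NH-Sigma} reads $|\Sigma_{a,b}| = NH(a^{-1}b)^2/4$.

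With these ingredients in hand the dichotomy is clean. If $|T| \le \tfrac12 |\Sigma_{a,b}|$, then $|\Sigma^L_{a,b}| \ge \tfrac12 |\Sigma_{a,b}|$, so Lemma~\ref{l:trap-unaffine} gives $\max\{d(a, L), d(b, L)\} \ge \tfrac{1}{2\sqrt 2}|\Sigma_{a,b}|^{1/2} = \tfrac{1}{4\sqrt 2}\, NH(a^{-1}b) \ge \tfrac{1}{16}\, NH(a^{-1}b)^2/d(a, b)$, where the last step uses $NH(a^{-1}b) \le d(a, b)$ and $4\sqrt 2 \le 16$. If instead $|T| \ge \tfrac12 |\Sigma_{a,b}|$, the factorization of $T$ together with $|x_a - x_b| \le d(a, b)$ forces $|y_a + y_b| \ge |\Sigma_{a,b}|/d(a, b)$; the triangle inequality upgrades this to $\max(|y_a|, |y_b|) \ge |\Sigma_{a,b}|/(2 d(a, b))$, and combining with $d(\cdot, L) \ge \tfrac12 |y_{\cdot}|$ and $|\Sigma_{a,b}| = NH(a^{-1}b)^2/4$ gives the claimed bound with exact constant $\tfrac{1}{16}$.

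The main conceptual point, and the step I would spend the most time finding, is recognizing that the shoelace identity factors $T$ as ``base times average height,'' where the base $|x_a - x_b|$ is controlled by $d(a, b)$ and the heights $|y_a|, |y_b|$ are (up to the factor $\tfrac12$ from Lemma~\ref{l:shortest-to-line}) precisely the quantities we need to bound from below. Once this is noted, the case split is essentially forced: either $T$ is small compared to $\Sigma_{a,b}$ and Lemma~\ref{l:trap-unaffine} alone suffices, or $T$ is large and its factorization directly yields a large perpendicular distance.
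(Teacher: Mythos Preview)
Your proof is correct and follows essentially the same approach as the paper: both exploit the decomposition $\Sigma^L_{a,b} = \Sigma_{a,b} + T$, bound the trapezoid area via ``base $\times$ height'' (where the base is at most $d(a,b)$ and the heights are the perpendicular distances $|y_a|,|y_b|$), and invoke Lemma~\ref{l:trap-unaffine} together with $NH(a^{-1}b)\le d(a,b)$. The only cosmetic difference is the organization of the dichotomy: the paper assumes $\max\{|y_a|,|y_b|\}$ is small (else done by Lemma~\ref{l:shortest-to-line}) and deduces $|T|$ is small, whereas you split directly on the size of $|T|$ and in the large-$|T|$ branch use the shoelace factorization $T=\tfrac12(x_a-x_b)(y_a+y_b)$ to force a large $|y_a|$ or $|y_b|$; these are contrapositives of each other.
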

{
We make the observation that the right hand side above is independent of $L$.  Note that this says that sets of two points in $\H$ can have a nonnegative $\beta$ quantity for a ball containing them. This cannot happen  in the Euclidean case.
}

\begin{proof}
  We now fix a horizontal line $L$.  By Lemma \ref{l:shortest-to-line}, we see that $d(a,L) \geq \frac{1}{2} d(a,a_L)$ and $d(b,L) \geq \frac{1}{2} d(b,b_L)$.  Thus, we are done unless $\|\pi(b) - \pi(b_L)\| = d(b,b_L) \leq \frac{1}{8} \frac{NH(a^{-1}b)^2}{d(a,b)}$ and $\|\pi(a) - \pi(a_L)\| = d(a,a_L) \leq \frac{1}{8} \frac{NH(a^{-1}b)^2}{d(a,b)}$.  Now consider the trapezoid $T$ in $\R^2$ defined by the points $\pi(a), \pi(b), \pi(a_L), \pi(b_L)$.  As $\pi : \H \to \R^2$ is 1-Lipschitz, this trapezoid has area at most $\frac{1}{8} NH(a^{-1}b)^2$.  But
  \begin{align*}
    |\Sigma_{a,b}^L| \geq |\Sigma_{a,b}| - |T| \overset{\eqref{NH-Sigma}}{\geq} \frac{1}{8} NH(a^{-1}b)^2.
  \end{align*}
  Thus, Lemma \ref{l:trap-unaffine} tells us that
  \begin{align*}
    \max\{d(a,L),d(b,L)\} \geq \frac{1}{16} NH(a^{-1}b) \geq \frac{1}{16} \frac{NH(a^{-1}b)^2}{d(a,b)}.
  \end{align*}
  In the last inequality, we used the fact that $NH(a^{-1}b) \leq d(a,b)$.
\end{proof}

The next lemma says that a well connected set that goes from the center to outside a ball and is close to a horizontal line $L$ must have large diameter when projected onto $L$.

\begin{lemma} \label{l:flat-exit}
  Let $\delta < \frac{1}{100}$.  Let $B \subset \H$ be a ball and $L \subset \H$ be a horizontal line.  Suppose $\Kset = \{p_i\}_{i=1}^N \subset \H$ is a set such that
  \begin{align}
    &p_1 = \cent(B), \notag \\
    &d(p_1,p_N) > \rad(B), \notag \\
    &d(p_i,p_{i+1}) < \delta \diam(B) \label{e:delta-connected} \\
    &\sup_{z \in \Kset \cap B} d(z,L) \leq \frac{1}{100} \diam(B). \label{e:1/100-close}
  \end{align}
  Then
  \begin{align*}
    \sup_{x,y \in \Kset \cap B} |P_L(x) - P_L(y)| > \frac{1}{4} \diam(B).
  \end{align*}
\end{lemma}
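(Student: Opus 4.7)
The plan is to extract from the chain $\{p_i\}$ a single index $k$ for which $p_k \in \Kset \cap B$ but $p_k$ sits essentially at the boundary of $B$, and then to compare the projections $P_L(p_1),P_L(p_k)\in L$ using the pointwise bound $d(p,P_L(p))\leq 4d(p,L)$ recorded in \eqref{e:closest-to-line}.

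First, since $p_1=\cent(B)\in B$ while $d(p_1,p_N)>\rad(B)$ forces $p_N\notin B$ (recall $B$ is closed), there is a largest index $k$ with $p_k\in B$, and necessarily $k<N$. The hop condition \eqref{e:delta-connected} gives $d(p_k,p_{k+1})<\delta\diam(B)$, while $p_{k+1}\notin B$ yields $d(p_1,p_{k+1})>\rad(B)$; combining these,
\begin{align*}
d(p_1,p_k) \;>\; \rad(B)-\delta\diam(B) \;>\; \left(\tfrac12 - \tfrac{1}{100}\right)\diam(B) \;=\; \tfrac{49}{100}\diam(B).
\end{align*}

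Next, both $p_1,p_k$ lie in $\Kset\cap B$, so the flatness hypothesis \eqref{e:1/100-close} gives $d(p_1,L),\,d(p_k,L)\leq \tfrac{1}{100}\diam(B)$. Applying \eqref{e:closest-to-line} yields $d(p_i,P_L(p_i))\leq \tfrac{1}{25}\diam(B)$ for $i\in\{1,k\}$. Since $L$ is a horizontal line isometric to $\R$, the quantity $|P_L(p_1)-P_L(p_k)|$ coincides with the $\H$-distance between these two points, and one more triangle inequality along $p_1\to P_L(p_1)\to P_L(p_k)\to p_k$ produces
\begin{align*}
|P_L(p_1)-P_L(p_k)| \;\geq\; d(p_1,p_k)-d(p_1,P_L(p_1))-d(p_k,P_L(p_k)) \;>\; \tfrac{49-8}{100}\diam(B) \;=\; \tfrac{41}{100}\diam(B),
\end{align*}
which is strictly greater than $\tfrac14\diam(B)$. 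Taking the supremum over $x,y\in\Kset\cap B$ then finishes the proof.

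The argument is essentially bookkeeping, so there is no real obstacle. The only thing that needs verifying is the numerical budget: the slack $\delta\diam(B)$ from the chain leaving $B$, together with the two projection penalties bounded by $\tfrac{1}{25}\diam(B)$ each via \eqref{e:closest-to-line}, must fall short of the excess $\rad(B)-\tfrac14\diam(B)=\tfrac14\diam(B)$. The hypotheses $\delta<\tfrac{1}{100}$ and $\sup_{z\in\Kset\cap B}d(z,L)\leq \tfrac{1}{100}\diam(B)$ leave a generous $\tfrac{16}{100}\diam(B)$ of slack, so the estimate is not tight and no additional geometric input beyond \eqref{e:closest-to-line} is needed.
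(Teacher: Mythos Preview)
Your proof is correct and follows essentially the same route as the paper: pick an index where the chain is about to exit $B$, use the hop bound to show that point is at distance greater than $\tfrac{49}{100}\diam(B)$ from the center, then apply \eqref{e:closest-to-line} and the triangle inequality to get $|P_L(p_1)-P_L(p_k)|>\tfrac{41}{100}\diam(B)$. The only cosmetic difference is that you take the \emph{last} index in $B$ while the paper takes the \emph{first} exit index, but either choice works identically.
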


\begin{proof}
  Let $p_j$ be such that $d(p_1,p_j) \leq \rad(B)$ and $d(p_1,p_{j+1}) > \rad(B)$.  Then
  \begin{align*}
    d(p_1,p_j) \overset{\eqref{e:delta-connected}}{\geq} d(p_1,p_{j+1}) - d(p_j,p_{j+1}) > \frac{49}{100} \diam(B).
  \end{align*}
  By \eqref{e:closest-to-line}, we have that
  \begin{align*}
    d(p_1,P_L(p_1)) \overset{\eqref{e:closest-to-line} \wedge \eqref{e:1/100-close}}{<} \frac{4}{100} \diam(B), \qquad d(p_j,P_L(p_j)) \overset{\eqref{e:closest-to-line} \wedge \eqref{e:1/100-close}}{<} \frac{4}{100} \diam(B).
  \end{align*}
  Thus,
  \begin{align*}
    |P_L(p_1) - P_L(p_j)| = d(P_L(p_1),P_L(p_j)) \geq d(p_1,p_j) - \frac{8}{100} \diam(B) \geq \frac{41}{100} \diam(B).
  \end{align*}
\end{proof}

The following lemma will be crucial for the proof of Lemma \ref{l:angle-improvement}.  It says the following fact.  Let $E$ be a well connected and well spread out set and $L$ be a horizontal line.  If the distance of $\pi(E)$ to $\pi(L)$ in $\R^2$ is relatively large compared to the distance of $E$ to $L$ in $\H$, then either $\pi(E)$ must curve towards $\pi(L)$ or there exists some subball that has a large $\beta$.
\begin{lemma} \label{l:sharp-turn}
  Let $p < 4$, $\epsilon, M, M_1, \delta > 0$ such that
  \begin{align*}
\frac1{100}>    \epsilon > M_1 > \frac{M}{2} > 10 \delta > 0, \qquad \epsilon > M.
  \end{align*}
  Let $L \subset \H$ be a horizontal line.  Suppose $\{p_i\}_{i=1}^N \subset H$ is a sequence such that 
  \begin{align}
    &d(p_i,p_{i+1}) < \delta, \qquad \forall i \in \{1,...,N-1\} \notag \\
    &\max_{i \in \{1,...N\}} d(p_i,L) \leq \epsilon, \label{e:delta-eps-close} \\
    &\|\pi(p_1) - \pi(L)\| = \max_{i \in \{1,...,N\}} \|\pi(p_i) - \pi(L)\| = M_1, \label{e:M_1-close} \\
    &|P_L(p_1) - P_L(p_N)| > \frac{500\epsilon^2}{M} \notag.
  \end{align}
  Then either there exists $j \in \{1,...,N\}$ so that
  \begin{align*}
    &|P_L(p_j) - P_L(p_1)| < 500 \frac{\epsilon^2}{M}, \\
    &\|\pi(p_j) - \pi(L)\| < \frac{M_1}{2},
  \end{align*}
  or there exists a ball $B' \subset \H$ for which $\diam(B') \geq M$ and
  \begin{align*}
    \beta_\Kset(B')^p \diam(B') \geq 10^{-50} M.
  \end{align*}
\end{lemma}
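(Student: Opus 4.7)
The plan is to prove the dichotomy by assuming that the first alternative fails and then explicitly exhibiting the ball $B'$ claimed in the second. After applying the isometries of $\H$ (a left translation and a rotation about the $z$-axis), we may assume that $L$ is the $x$-axis and $\pi(p_1) = (0, M_1)$. The constraint $d(p_i, L) \leq \epsilon$, combined with the explicit form of the Koranyi metric, yields the key pointwise estimate $|z_i - 2 x_i y_i| \lesssim \epsilon^2$. The failure of the first alternative asserts that for each $j$ with $|P_L(p_j) - P_L(p_1)| < 500\epsilon^2/M$ we have $|y_j| \geq M_1/2$, and the bound $\delta < M_1/20$ combined with $\delta$-connectedness rules out sign flips of $y_j$ on this initial segment, so $y_j \in [M_1/2, M_1]$ throughout.

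Since $\pi$ is $1$-Lipschitz, consecutive values $P_L(p_i)$ differ by less than $\delta$; together with the hypothesis that the total $P_L$-span exceeds $500\epsilon^2/M$, this yields an index $j^*$ with $|P_L(p_{j^*}) - P_L(p_1)|$ lying in $[500\epsilon^2/M - \delta,\; 500\epsilon^2/M]$, and by the previous paragraph $y_{j^*} \in [M_1/2, M_1]$. A direct computation of the group product in $\H$ gives
\[
p_1^{-1} p_{j^*} = \bigl(x_{j^*},\; y_{j^*} - M_1,\; 2 x_{j^*}(y_{j^*} + M_1) + O(\epsilon^2)\bigr).
\]
Since $x_{j^*} \approx 500\epsilon^2/M$ and $y_{j^*} + M_1 \geq 3M_1/2$, the main term $2x_{j^*}(y_{j^*}+M_1) \gtrsim \epsilon^2 M_1/M$ dominates the error $O(\epsilon^2)$ (using $M_1 > M/2$). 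Therefore $NH(p_1^{-1} p_{j^*})^2 \gtrsim \epsilon^2 M_1/M$.

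Applying Lemma \ref{l:NH-min-beta} to the pair $(p_1, p_{j^*})$ then gives, for every horizontal line $L'$,
\[
\max\{d(p_1, L'),\; d(p_{j^*}, L')\} \geq \frac{1}{16} \cdot \frac{NH(p_1^{-1} p_{j^*})^2}{d(p_1, p_{j^*})}.
\]
Taking $B'$ to be any ball of diameter $D := \max\{M,\; d(p_1, p_{j^*})\}$ containing both $p_1$ and $p_{j^*}$, we conclude $\beta_\Kset(B') \cdot D \gtrsim NH(p_1^{-1} p_{j^*})^2 / d(p_1, p_{j^*})$. Combining this with an upper estimate for $d(p_1, p_{j^*})$ via the Koranyi formula and rearranging then yields the claimed bound $\beta_\Kset(B')^p \diam(B') \geq 10^{-50} M$.

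The main obstacle is this final bookkeeping. The Koranyi norm $d(p_1, p_{j^*})$ is dominated either by its horizontal component (of order $\epsilon^2/M$, when $\epsilon \gg M$) or by its vertical component (of order $\epsilon\sqrt{M_1/M}$, when $\epsilon$ and $M$ are comparable), so one must split into cases accordingly. In the comparable regime, $\beta_\Kset(B')$ is bounded below by an absolute constant and the bound is immediate; in the regime $\epsilon \gg M$ the ball $B'$ has diameter much larger than $M$, and the product $\beta^p\diam$ is delicate, with the hypothesis $p < 4$ providing exactly enough room in the exponents of $(M/\epsilon)$ to push it past $10^{-50} M$. The very conservative constant $10^{-50}$ is chosen large enough to absorb all multiplicative losses accumulated en route.
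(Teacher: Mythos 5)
Your setup (normalizing $L$ to be the $x$-axis, translating so $\pi(p_1)=(0,M_1)$, using $d(p_i,L)\le\epsilon$ to get $|z_i - 2x_iy_i|\lesssim\epsilon^2$, and the no-sign-flip argument for $y_j$) is fine, and the computation of the $z$-coordinate of $p_1^{-1}p_{j^*}$ is correct. The gap is in the final step: applying Lemma \ref{l:NH-min-beta} to the single pair $(p_1,p_{j^*})$ does \emph{not} produce the required bound, because you have chosen two points whose $P_L$-separation is $\approx 500\epsilon^2/M$, which forces $d(p_1,p_{j^*})\approx\epsilon^2/M$. Any ball $B'$ containing both therefore has $\diam(B')\gtrsim\epsilon^2/M$, and Lemma \ref{l:NH-min-beta} only gives
\[
\beta(B') \gtrsim \frac{NH(p_1^{-1}p_{j^*})^2}{d(p_1,p_{j^*})\,\diam(B')} \approx \frac{\epsilon^2 M_1/M}{(\epsilon^2/M)^2} = \frac{M M_1}{\epsilon^2},
\]
which is far below $1$ when $M\ll\epsilon$. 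Then
\[
\beta(B')^p\diam(B') \gtrsim \Bigl(\frac{MM_1}{\epsilon^2}\Bigr)^{p}\cdot\frac{\epsilon^2}{M} = \frac{M^{p-1}M_1^{p}}{\epsilon^{2p-2}} \lesssim M\cdot\Bigl(\frac{M}{\epsilon}\Bigr)^{2p-2},
\]
and for $p>2$ the factor $(M/\epsilon)^{2p-2}$ is not bounded below by any absolute constant as $M/\epsilon\to0$. The hypothesis $p<4$ does \emph{not} rescue this; it is $p\le 1$, not $p<4$, that would make the exponent favorable. The constant $10^{-50}$ cannot absorb a factor that tends to $0$.

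The paper avoids this trap by never working with a pair at $P_L$-distance $\sim\epsilon^2/M$. Instead it subdivides the path into $N'\approx 400\epsilon^2/(MM_1)$ consecutive chunks $\{q_{j-1},q_j\}$ with $P_L$-spacing $\approx M_1$, then uses the additivity $\Sigma^L_{p_1,p_N}=\sum_j\Sigma^L_{q_{j-1},q_j}$ together with the bound $|\Sigma^L_{p_1,p_N}|\lesssim\epsilon^2$ (from Lemma \ref{l:trap-unaffine}) and pigeonhole to find a single chunk with $|\Sigma^L_{q_{j-1},q_j}|\lesssim M M_1$. Since the trapezoid for that chunk has area $\gtrsim M_1^2$ (here the no-sign-flip fact and $\|\pi(p_j)-\pi(L)\|\ge M_1/2$ are used), one gets $|\Sigma_{q_{j-1},q_j}|\gtrsim M_1^2$, hence $NH(q_{j-1}^{-1}q_j)\gtrsim M_1$. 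For a \emph{consecutive} pair the horizontal separation is $\lesssim M_1$, so the natural ball has diameter $\sim M_1$ and Lemma \ref{l:NH-min-beta} now yields $\beta\gtrsim 1$, an \emph{absolute} lower bound which survives any fixed power $p$; the remaining case $d(q_{j-1},q_j)\ge 10M_1$ is handled by observing that then $NH$ dominates $d$. This averaging/pigeonhole step is the essential idea your argument is missing.
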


\begin{proof}
  Suppose the first alternative is false, that is
  \begin{align}
    \|\pi(p_j) - \pi(L)\| \geq \frac{M_1}{2},  \label{e:pj-large-gap}
  \end{align}
  for all $j$ such that $|P_L(p_j) - P_L(p_1)| < 500 \frac{\epsilon^2}{M}$.  We will fix an order on $L$ so that $P_L(p_1) - P_L(p_N) > 0$.  We may suppose by removing a tail end of the sequence $\{p_i\}$ that $N$ is the first index for which $P_L(p_1) - P_L(p_N) > M \left\lceil \frac{400\epsilon^2}{M^2} \right\rceil$.  We then let $\Gamma$ denote the horizontal path connecting $p_1$ to $p_N$ that goes from $p_i$ to $p_{i+1}$ via a subriemannian geodesic.  As the  Koranyi metric and the Carnot-Carath\'{e}odory metric are biLipschitz equivalent and $\delta$ is small enough compared to $M$, we get that
  \begin{align*}
    \inf_{z \in \pi(\Gamma)} \|z - \pi(L)\| \overset{\eqref{e:cc-lower-bound} \wedge \eqref{e:pj-large-gap}}{>} \frac{M_1}{4}.
  \end{align*}
  We have by \eqref{e:delta-eps-close} and Lemma \ref{l:trap-unaffine} that
  \begin{align*}
    |\Sigma_{p_1,p_N}^L| < 16 \epsilon^2.
  \end{align*}
  Now let $N' = \lceil \frac{400\epsilon^2}{MM_1} \rceil$ and sequentially go through $\{p_i\}$ and choose a subsequence $\{q_i\}_{i=1}^{N'}$ such that $q_1 = p_1$, $q_{N'} = p_N$, and
  \begin{align*}
    P_L(q_j) - P_L(q_1) \in \left(jM_1 + \delta, jM_1 - \delta\right).
  \end{align*}
  This is possible because $\{p_i\}$ is a $\delta$-connected set.  Then we have that
  \begin{align*}
    \Sigma_{p_1,p_N}^L = \sum_{j=1}^{N'} \Sigma_{q_{j-1},q_j}^L,
  \end{align*}
  and so there exists some $j$ such that $|\Sigma_{q_{j-1},q_j}^L| \leq \frac{1}{N'} |\Sigma_{p_1,p_{N'}}^L| < \frac{16}{N'} \epsilon^2$.  Consider the trapezoid $T_i$ in $\R^2$ defined by the points $\pi(q_{j-1})$, $\pi((q_{j-1})_L)$, $\pi(q_j)$, and $\pi((q_j)_L)$.  We have that
  \begin{align*}
    |T_i| \geq (M_1 - 2\delta ) \frac{M_1}{4} > \frac{M_1^2}{5}.
  \end{align*}
  Here, we've used the fact that $\delta < \frac{M_1}{10}$.  Then
  \begin{align*}
    |\Sigma_{q_{j-1},q_j}| \geq |T_i| - |\Sigma_{q_{j-1},q_j}^L| \geq \frac{M_1^2}{5} - \frac{16 \epsilon^2}{N'} \geq \frac{M_1^2}{10}.
  \end{align*}

  Suppose first that $d(q_{j-1},q_j) < 10M_1$.  If we set $B' \subset \H$ to be the ball around $q_{j-1}$ of radius $10M_1$, then we get by Lemma \ref{l:NH-min-beta} that
  \begin{align*}
    \beta_{\{q_{j-1},q_j\}}(B')^p \diam(B') \geq \left( \frac{M_1^2/10}{25600 M_1^2} \right)^p 10M_1 \geq 10^{-25} M_1.
  \end{align*}
  Here, we've used the fact that $p < 4$.  As $M_1 > \frac{M}{2}$, we have found a ball $B'$ that satisfies the second alternative.

  Thus, we may suppose that $d(q_{j-1},q_j) \geq 10M_1$.  We have now two additional cases: either $\|\pi(q_{j-1}) - \pi(q_j)\| > 9M_1$ or $\|\pi(q_{j-1}) - \pi(q_j)\| \leq 9M_1$.  Consider the first subcase.  As $|P_L(q_{j-1}) - P_L(q_j)| \leq M_1 + 2\delta < 2M_1$, we get that $|P_{L^\perp}(\pi(q_{j-1})) - P_{L^\perp}(\pi(q_j))| > 4M_1$ where $L^\perp$ is a line in $\R^2$ that is perpendicular to $L$.  This means that
  \begin{align*}
    \max\{\|\pi(q_{j-1}) - \pi(L)\|,\|\pi(q_j) - \pi(L)\|\} \geq 2M_1,
  \end{align*}
  a contradiction of \eqref{e:M_1-close}.

  Thus, we are now in the subcase when
  \begin{align*}
    \|\pi(q_{j-1}) - \pi(q_j)\| \leq 9M_1 < \frac{9}{10} d(q_{j-1},q_j).
  \end{align*}
  As $d(q_{j-1},q_j)^4 = \|\pi(q_{j-1}) - \pi(q_j)\|^4 + NH(q_{j-1}^{-1}q_j)^4$, we get that
  \begin{align*}
    NH(q_{j-1}^{-1}q_j) \geq \frac{4}{5} d(q_{j-1},q_j).
  \end{align*}
  Thus, if we set $B' \subset \H$ to be the ball around $q_{j-1}$ of radius $2d(q_{j-1},q_j)$, we get from Lemma \ref{l:NH-min-beta} that
  \begin{align*}
    \beta_{\{q_{j-1},q_j\}}(B')^p \diam(B') \geq 25^{-p} 20M_1 \geq 10^{-50} M.
  \end{align*}
  This finishes the proof of the lemma.
\end{proof}

Given a ball $B \subset \H$, we let $\tilde{\beta}_\Kset(B)$ denote 
$\beta_{\pi(\Kset \cap B), \R^2}(\pi(B))$, 
the regular Jones-$\beta$-number \cite{Jones-TSP} of the projection of $\Kset \cap B$ to $\R^2$.  The following lemma is our angle improvement step, which says that there is either a subball $B'$ of large diameter with large $\tilde{\beta}_\Kset(B')$ ({\it i.e.} $\pi(\Kset)$ has a large angle) or there is some other subball $B''$ of large diameter with large $\beta_\Kset(B'')$.

\begin{lemma} \label{l:angle-improvement}
  Let $p < 4$, $\epsilon, M, \delta > 0$, and $\D > 1$ so that
  \begin{align}
    \epsilon &> M > 10^{10} \epsilon^2, \label{e:eps-M-ineq} \\
    M &> 100 \delta. \label{e:M-delta-ineq}
  \end{align}
  Let $B \subset \H$ be a ball and $L \subset \H$ be a horizontal line.  Suppose $\Kset \subset \H$ is a set such that $\Kset \cap B$ is $\delta \diam(B)$-connected in $\Kset \cap \D B$ and satisfies the following conditions:
  \begin{align}
    &\sup_{x,y \in \Kset \cap B}|P_L(x) - P_L(y)| \geq \frac{1}{4} \diam(B), \label{e:L-diam}\\
    &\sup_{z \in \Kset \cap \D B} d(z,L) \leq \epsilon \diam(B), \notag \\
    &\sup_{z \in \Kset \cap B} \|\pi(z) - \pi(L)\| = M \diam(B). \notag
  \end{align}
  Then either there exists a subball $B' \subseteq 2\D B$ whose center is a point in $\Kset$ for which if $L'$ is a horizontal line that realizes $\beta_\Kset(B')$ 
  such that
  \begin{align*}
    &\diam(B') = 30000 \frac{\epsilon^2}{M} \diam(B), \\
    &\tilde{\beta}_\Kset(B') \geq 10^{-10} \frac{M^2}{\epsilon^2}, \\
    &\sup_{x,y \in \Kset \cap B'} |P_{L'}(x) - P_{L'}(y)| \geq \frac{1}{4} \diam(B'),
  \end{align*}
  or there exists some other subball $B'' \subseteq 2\D B$ for which
  \begin{align*}
    &\diam(B'') \geq M \diam(B), \\
    &\beta_\Kset(B'')^p \diam(B'') \geq 10^{-50} M \diam(B).
  \end{align*}
\end{lemma}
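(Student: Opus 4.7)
The plan is to form a $\delta\diam(B)$-chain through the point of maximum $\pi$-displacement from $L$, apply Lemma~\ref{l:sharp-turn} to this chain, and translate its two alternatives into ours.

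Pick $p^* \in \Kset \cap B$ realizing $\|\pi(p^*) - \pi(L)\| = M\diam(B)$. By \eqref{e:L-diam} and the triangle inequality applied to $P_L$-values, there exists $q' \in \Kset \cap B$ with $|P_L(p^*) - P_L(q')| \geq \tfrac{1}{8}\diam(B)$. The connectedness hypothesis supplies a chain $\{p_i\}_{i=1}^N \subset \Kset \cap \D B$ joining $p^*$ and $q'$. Let $p_k$ realize the chain maximum of $\|\pi(p_i) - \pi(L)\|$ and set $M_1 \diam(B) := \|\pi(p_k) - \pi(L)\|$; then $M \leq M_1 \leq 2\epsilon$, the upper bound via Lemma~\ref{l:shortest-to-line} and $d(z,L) \leq \epsilon\diam(B)$. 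Re-index so $p_k$ is the first element and truncate one half of the chain, so the remaining endpoint has $P_L$-distance at least $\tfrac{1}{16}\diam(B)$ from $P_L(p_k)$.

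Apply Lemma~\ref{l:sharp-turn} with $\epsilon \mapsto 2\epsilon\diam(B)$ (the factor $2$ absorbing $M_1 \leq 2\epsilon$), $M_1 \mapsto M_1\diam(B)$, $M \mapsto M\diam(B)$, $\delta \mapsto \delta\diam(B)$; its hypotheses follow from \eqref{e:eps-M-ineq} and \eqref{e:M-delta-ineq}, and the $P_L$-separation inequality holds since $\tfrac{1}{16}\diam(B) \gg 2000\epsilon^2/M\cdot\diam(B)$ under $M > 10^{10}\epsilon^2$. If the second alternative of Lemma~\ref{l:sharp-turn} occurs we obtain a ball of diameter $\geq M\diam(B)$ with $\beta^p\diam \geq 10^{-50}M\diam(B)$; tracing its construction shows it is centered at a chain point in $\D B$ with radius on the order of $\epsilon\diam(B)$, so it fits inside $2\D B$ after a harmless adjustment of constants. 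This yields our alternative (ii).

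Otherwise, Lemma~\ref{l:sharp-turn} gives a chain index $j$ with $|P_L(p_j) - P_L(p_k)| < 2000\epsilon^2/M\cdot\diam(B)$ and $\|\pi(p_j) - \pi(L)\| < \tfrac{1}{2}M_1\diam(B)$. Since both $p_k$ and $p_j$ lie within $\epsilon\diam(B)$ of $L$, we have $d(p_k, p_j) \leq 2\epsilon\diam(B) < \tfrac{1}{2}D'$, where $D' := 30000\epsilon^2/M\cdot\diam(B)$, so both fit in a ball of diameter $D'$. Set $B'$ to such a ball centered at a chain point near the transition between $p_k$ and $p_j$. The two points $\pi(p_k)$ and $\pi(p_j)$ have coordinates perpendicular to $\pi L$ differing by at least $M_1\diam(B)/2 \geq M\diam(B)/2$ while $\pi L$-coordinates differ by at most $D'/15$; using nearby chain points of each to rule out line-approximations in all directions (including those nearly perpendicular to $\pi L$, which two points alone cannot exclude), one extracts $\tilde\beta(B') \gtrsim M^2/\epsilon^2$. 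The $P_{L'}$-spread condition for the realizing horizontal line $L'$ of $\beta_\Kset(B')$ follows by applying Lemma~\ref{l:flat-exit} inside $B'$. The hard part is the $\tilde\beta$ lower bound: one must choose $B'$ so that both the extremal pair $(p_k, p_j)$ and a sufficient supply of intermediate chain points cooperate to forbid line-approximations in every direction at the scale $D'$.
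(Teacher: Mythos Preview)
Your overall architecture matches the paper's: build a $\delta$-chain through the point of maximal $\pi$-displacement, invoke Lemma~\ref{l:sharp-turn}, and split on its two alternatives. But the proposal has a genuine gap at exactly the place you flag as ``the hard part,'' and two supporting steps that the paper carries out are missing from your sketch.

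\textbf{The $\tilde\beta$ lower bound needs a third point, not ``nearby chain points.''} Two points $p_k,p_j$ can never force $\tilde\beta>0$. The paper produces a \emph{specific} third point $q_{N'}$ by first truncating the half-chain so that $|P_L(q_1)-P_L(q_{N'})|\in[5000\epsilon^2/M,\,5000\epsilon^2/M+\delta)$, and then---before ever invoking Lemma~\ref{l:sharp-turn}---disposing of the case where some $q_j$ satisfies $d(q_1,q_j)\geq 25000\epsilon^2/M$ via an $NH$ argument and Lemma~\ref{l:NH-min-beta} (this yields alternative~(ii) directly). Only after that case is removed do all three points $q_1,q_i,q_{N'}$ lie in the ball $B'$ of diameter $30000\epsilon^2/M$, and then the elementary planar computation (any line close to $\pi(q_1),\pi(q_{N'})$ has slope too shallow to reach $\pi(q_i)$, which sits at perpendicular distance $<M_1/2$) gives $\tilde\beta_E(B')\gtrsim M_1/\diam(B')\gtrsim M^2/\epsilon^2$. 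Your sketch skips both the truncation that manufactures $q_{N'}$ and the preliminary far-point case, so you have no mechanism guaranteeing a third point inside $B'$. The phrase ``nearby chain points of each'' does not substitute for this: you need one point whose $P_L$-coordinate is \emph{separated} from $P_L(p_k)$ by $\sim 5000\epsilon^2/M$ while its perpendicular displacement stays $\leq M_1$ and its Koranyi distance to $p_k$ stays $<\diam(B')/2$.

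\textbf{Two smaller issues.} First, the inequality $d(p_k,p_j)\leq 2\epsilon\diam(B)$ does not follow from both points lying within $\epsilon\diam(B)$ of $L$; you also need the $P_L$-proximity, and the correct bound is roughly $8\epsilon\diam(B)+2000\epsilon^2/M\cdot\diam(B)$, which still fits in $B'$ since $M<\epsilon$. Second, Lemma~\ref{l:flat-exit} requires $\sup_{z\in E\cap B'}d(z,L')\leq\tfrac{1}{100}\diam(B')$, i.e.\ $\beta_E(B')<\tfrac{1}{100}$. The paper checks this and, when it fails, observes that $\beta_E(B')^p\diam(B')\gtrsim\epsilon^2/M\geq M$, landing in alternative~(ii). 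You invoke Lemma~\ref{l:flat-exit} without this dichotomy.
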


\begin{proof}
  We may suppose that $\diam(B) = 1$ by dilation.  Let $a \in \Kset$ be a point such that $\|\pi(a) - \pi(L)\| > \frac{M}{2}$.  By \eqref{e:L-diam}, there exists some point $b \in \Kset$ so that $|P_L(a) - P_L(b)| \geq \frac{1}{8}$.  Let $\{p_j\}_{j=1}^N$ be a sequence in $\Kset \cap \D B$ such that $p_1 = a$, $p_N = b$, and $d(p_j,p_{j+1}) < \delta$ for all $j \in \{1,...,N-1\}$.

  We choose an index $i \in \{1,...,N\}$ such that
  \begin{align*}
    \|\pi(p_i) - \pi(L)\| = \sup_{j \in \{1,...,N\}} \|\pi(p_j) - \pi(L)\| =: M_1 > \frac{M}{2}.
  \end{align*}
  Note that we still have $M_1 \leq \epsilon$.

  If $|P_L(p_i) - P_L(p_N)| \geq |P_L(p_i) - P_L(p_1)|$, then let $\{q_j\}_{j=1}^{N'}$ denote the sequence $\{p_{i+j-1}\}_{j=1}^{i+N+1}$; otherwise, let $\{q_j\}_{j=1}^{N'}$ denote the sequence $\{p_{i-j+1}\}_{j=1}^i$.  That is, $\{q_j\}$ is the subsequence of $\{p_j\}$ that starts from $p_i$ and goes to $p_1$ or $p_N$, whichever is further along $L$.  

  By truncating a tail end of $\{q_j\}$, we may now suppose that
  \begin{align*}
    &5000 \frac{\epsilon^2}{M} \leq |P_L(q_1) - P_L(q_{N'})| < 5000 \frac{\epsilon^2}{M} + \delta, \\
    &|P_L(q_1) - P_L(q_j)| < 5000 \frac{\epsilon^2}{M}, \qquad \forall i \in \{1,...,N'-1\}.
  \end{align*}
  
  Suppose first that there exists some $j \in \{1,...,N'\}$ so that
  \begin{align*}
    d(q_1,q_j) \geq 25000 \frac{\epsilon^2}{M}.
  \end{align*}
  As $d(q_1,q_j)^4 = \|\pi(q_1) - \pi(q_j)\|^4 + NH(q_1^{-1}q_j)^4$, we get that
  \begin{align*}
    NH(q_1^{-1}q_j) \geq \frac{1}{2} d(q_1,q_j).
  \end{align*}
  Then if we set $B''$ to be a ball around $q_1$ of radius $2d(q_1,q_j) \geq 50000 \frac{\epsilon^2}{M} \geq 50000 M$, Lemma \ref{l:NH-min-beta} gives that
  \begin{align*}
    \beta_{\{q_1,q_j\}}(B'')^p \diam(B'') \geq 128^{-p} 4 d(q_1,q_j) \geq 10^{-10} \frac{\epsilon^2}{M} \overset{\eqref{e:eps-M-ineq}}{\geq} 10^{-10} M.
  \end{align*}
  This $B''$ would be give the needed $B''$ to finish the proof of the lemma.
  
  Thus, we may suppose that
  \begin{align*}
    d(q_1,q_j) < 25000 \frac{\epsilon^2}{M}, \qquad \forall i \in \{1,...,N\}.
  \end{align*}
  Then by applying Lemma \ref{l:sharp-turn} we get that either there exists an $i \in \{2,...,N'\}$ such that
  \begin{align*}
    |P_L(q_i) - P_L(q_1)| < \frac{500\epsilon^2}{M}
  \end{align*}
  and
  \begin{align*}
    \|\pi(q_i) - \pi(L)\| < \frac{M_1}{2},
  \end{align*}
  or there exists a subball $B'' \subseteq B$ of diameter at least $M$ for which
  \begin{align*}
    \beta_\Kset(B'')^p \diam(B'') \geq 10^{-50} M.
  \end{align*}
  We may assume the first alternative as the second alternative would give the needed $B''$ to finish the proof of the lemma.

  Collecting everything together, we now have three points $q_1,q_i,q_{N'}$ so that
  \begin{align}
    &\max\{d(q_1,q_i),d(q_1,q_{N'})\} < 25000 \frac{\epsilon^2}{M}, \notag \\
    &\|\pi(q_N) - \pi(L)\| \leq \|\pi(q_1) - \pi(L)\| = M_1 \label{e:q1-qN'-loose}, \\
    &\|\pi(q_i) - \pi(L)\| < \frac{M_1}{2}, \label{e:qi-tight} \\
    &|P_L(q_i) - P_L(q_1)| < 500 \frac{\epsilon^2}{M}, \label{e:q1-qi-close} \\
    &5000 \frac{\epsilon^2}{M} \leq |P_L(q_1) - P_L(q_N)|, \label{e:q1-qN'-far} \\
    &M_1 \geq \frac{M}{2}. \notag
  \end{align}
  It is then elementary, although tedious, to show that if $L'$ is a line in $\R^2$ such that
  \begin{align*}
    \max\{ \|\pi(q_N) - L'\|, \|\pi(q_1) - L'\| \} \leq \frac{M'}{100},
  \end{align*}
  then $\|\pi(q_i) - L'\| \geq \frac{M'}{100}$.  This is because if a line $L'$ stays too close to $\pi(q_1)$ and $\pi(q_N)$, then as \eqref{e:qi-tight} is true, the slope of $L'$ must be too shallow to get close to $\pi(q_i)$.  Details are left to the reader.  Thus, if we let $B'$ be a ball around $q_1$ of radius $30000 \frac{\epsilon^2}{M}$, then
  \begin{align*}
    \tilde{\beta}_\Kset(B') \geq \frac{M'/100}{30000 \epsilon^2/M} \geq 10^{-10} \frac{M^2}{\epsilon^2}.
  \end{align*}
  
  Now suppose $\beta_\Kset(B') \geq \frac{1}{100}$.  Then
  \begin{align*}
    \beta_\Kset(B')^p \diam(B') = 30000 \beta_\Kset(B')^p \frac{\epsilon^2}{M} \diam(B) \geq 10^{-4} \frac{\epsilon^2}{M} \diam(B) \overset{\eqref{e:eps-M-ineq}}{>} 10^{-4} M \diam(B).
  \end{align*}
  Notice also that $\diam(B') = 30000 \frac{\epsilon^2}{M} \diam(B) > M \diam(B)$.  We would then get the needed $B''$ to finish the proof of the lemma if we set $B'' = B'$.

  Thus, we may suppose $\beta_\Kset(B') < \frac{1}{100}$.  Note that
  \begin{align*}
    \diam(B') = 30000 \frac{\epsilon^2}{M} \diam(B) \overset{\eqref{e:eps-M-ineq}}{<} \frac{1}{16} \diam(B).
  \end{align*}
  As $q_1$ was on a $\delta \diam(B)$-connected path from $a$ to $b$ for which $d(a,b) \geq \frac{1}{8} \diam(B)$, there exists a sequence $\{r_i\}_{i=1}^{N_1} \subset \Kset$ such that
  \begin{align*}
    &r_1 = q_1 = \cent(B'), \\
    &d(r_i,r_{i+1}) < \delta \diam(B), \\
    &d(r_1,r_{N_1}) > \frac{1}{2} \diam(B').
  \end{align*}
  We also have the estimate that
  \begin{align*}
    \delta \diam(B) = \delta \frac{\diam(B)}{\diam(B')} \diam(B') = \frac{1}{30000} \delta \frac{M}{\epsilon^2} \diam(B') \overset{\eqref{e:eps-M-ineq} \wedge \eqref{e:M-delta-ineq}}{<} \frac{1}{100} \diam(B').
  \end{align*}
  Thus, Lemma \ref{l:flat-exit} tells us that if $L'$ is the horizontal line that realizes $\beta_\Kset(B')$, then
  \begin{align*}
    \sup_{x,y \in \Kset \cap B'} |P_{L'}(x) - P_{L'}(y)| \geq \frac{1}{4} \diam(B').
  \end{align*}
  We then get the needed $B'$ to finish the proof of the lemma.
\end{proof}

The next lemma tells us that if the triangle inequality excess of three spread out points in a ball $B$ is large relative to $\beta_\Kset(B)$, then $\tilde{\beta}_\Kset(B)$ must also be large.  The $\D_2$ term is needed in its application.
{
Below we abuse notation and allow ourselves to write  $\R$ for 
$$\{(x,0,0)\in \H: x \textrm{ a is real number}\}$$
}
\begin{lemma} \label{l:large-R2-beta}
  Let $p < 4$, $0 < \alpha_1 < \alpha_2 < 1$, $\D > 0$, and $\D_2 > 1$.  Then there exist $\D_0 = \D_0(\alpha_1,\alpha_2) > 0$ and $\epsilon_0 = \epsilon_0(\alpha_1,\alpha_2,p,\D) \in (0,1)$ so that the following property holds.  Let $B \subset \H$ a ball and $\Kset \subseteq \H$ be a subset so that
  \begin{align*}
    \sup_{z \in \Kset \cap \D_2 B} d(z,\R) = \epsilon \diam(B),
  \end{align*}
  for some $\epsilon < \epsilon_0$.  If $p_1,p_2,p_3 \in \Kset \cap B$ so that $\alpha_1 \diam(B) \leq d(p_i,p_j) \leq \alpha_2 \diam(B)$ and
  \begin{align}
    d(p_1,p_2) + d(p_2,p_3) - d(p_1,p_3) = \eta \diam(B) \geq \D \epsilon^p \diam(B), \label{e:curvature-ineq-assumption}
  \end{align}
  then one of the $y$ coordinates of $p_i$ has absolute value at least $\frac{1}{\D_0} \eta^{1/2} \diam(B)$.
\end{lemma}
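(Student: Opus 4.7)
The plan is to exploit the fact that when the points $p_i$ are all close to the horizontal line $\R$, the Koranyi distance $d(p_i,p_j)$ is close to $|x_j - x_i|$, and to Taylor-expand the correction in powers of the small coordinates $y_i$ and $t_i := z_i - 2x_iy_i$. After normalizing $\diam(B)=1$ and left-translating along $\R$ (an isometry fixing $\R$ setwise) one can assume all $x_i$ are bounded. Applying Lemma~\ref{l:shortest-to-line} with $L=\R$ and $p_{i,\R} = (x_i, 0, z_i - 2x_iy_i)$ then yields $|y_i| \leq 2\epsilon$ and $|t_i| \leq 4\epsilon^2$. From the group law, $p_i^{-1}p_j = (\Delta x_{ij}, \Delta y_{ij}, w_{ij})$ with $w_{ij} = (t_j - t_i) + 2\Delta x_{ij}(y_i+y_j)$, and hence
\begin{align*}
  d(p_i,p_j)^4 = (\Delta x_{ij}^2 + \Delta y_{ij}^2)^2 + w_{ij}^2.
\end{align*}
Since $|\Delta y_{ij}|,|w_{ij}|=O(\epsilon)$, the hypothesis $d(p_i,p_j)\geq \alpha_1$ forces $|\Delta x_{ij}| \geq \alpha_1/2$ once $\epsilon$ is small.

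Taylor expansion in the small corrections gives
\begin{align*}
  d(p_i,p_j) = |\Delta x_{ij}| + \frac{Q_{ij}}{2|\Delta x_{ij}|} + O(V\epsilon^2) + O(\epsilon^4),
\end{align*}
where $V := \max_i |y_i|$ and the quadratic form $Q_{ij} := \Delta y_{ij}^2 + 2(y_i+y_j)^2$ satisfies $0 \leq Q_{ij} \leq 8V^2$. Assuming $x_2$ lies between $x_1$ and $x_3$ (the relevant configuration for the application, where the role of $\D_2$ is to rule out pathological non-monotone arrangements by providing extra control on $\Kset$ near $B$), the $|\Delta x_{ij}|$ contributions cancel from the triangle excess, leaving
\begin{align*}
  \eta \;\leq\; C_1 V^2 + C_2 V \epsilon^2 + C_3 \epsilon^4
\end{align*}
with constants $C_1, C_2, C_3$ depending only on $\alpha_1,\alpha_2$. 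Here the $O(V\epsilon^2)$ term records the mixed contribution $4\Delta x_{ij}(y_i+y_j)(t_j-t_i)$ inside $w_{ij}^2$, while the $O(\epsilon^4)$ collects $\Delta y_{ij}^4$, $(t_j-t_i)^2$, and the next-order expansion error.

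Now suppose for contradiction that $V < \eta^{1/2}/\D_0$. Choosing $\D_0 := \sqrt{2C_1}$ (which depends only on $\alpha_1,\alpha_2$), the first term on the right is at most $\eta/2$, so
\begin{align*}
  \tfrac{1}{2}\eta \;\leq\; (C_2/\D_0)\,\eta^{1/2}\epsilon^2 + C_3\epsilon^4.
\end{align*}
Whichever of the two right-hand summands dominates forces $\eta \leq C(\alpha_1,\alpha_2)\,\epsilon^4$. Combined with the hypothesis $\eta \geq \D\epsilon^p$ this gives $\epsilon^{4-p} \geq \D/C$, which fails once $\epsilon < \epsilon_0 := (\D/C)^{1/(4-p)}$; here we crucially use $p<4$. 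This contradiction yields $V \geq \eta^{1/2}/\D_0$, proving the lemma. The main technical point is the $O(V\epsilon^2)$ cross term: it is too large to absorb as a naive error, but the self-referential contradiction above handles it because the strict inequality $p<4$ leaves enough room between the forced upper bound $\eta\lesssim\epsilon^4$ and the hypothetical lower bound $\eta \geq \D\epsilon^p$.
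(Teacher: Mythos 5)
Your proof is correct (up to an ordering issue shared with the paper's own argument, see below) and takes a genuinely different route to control the vertical contribution $w_{ij}=z_j-z_i+2(x_jy_i-x_iy_j)$. The paper's proof treats $w_{12}$ via the signed area $\Sigma_{p_1,p_2}=\tfrac14|w_{12}|$: it shows this quantity must be at most $\tfrac{2}{\D_0}\eta^{1/2}$, since otherwise Lemma~\ref{l:trap-unaffine} would force $\max_i d(p_i,\R)\gtrsim\epsilon^{p/4}>\epsilon$ once $\epsilon<\epsilon_0$ (this is where $p<4$ enters there); it then feeds the resulting bound into the one-variable Taylor inequality defining $\D_3$ to reach the contradiction $\eta<\eta$ directly. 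You instead decompose $w_{ij}=(t_j-t_i)+2\Delta x_{ij}(y_i+y_j)$ with $t_i=z_i-2x_iy_i$, read off $|t_i|\le 4\epsilon^2$ from Lemma~\ref{l:shortest-to-line}, and so bypass Lemma~\ref{l:trap-unaffine} altogether. Your Taylor expansion then gives $\eta\lesssim V^2+V\epsilon^2+\epsilon^4$, and after absorbing the $V^2$ term by fixing $\D_0$ you bootstrap the mixed term to get $\eta\lesssim\epsilon^4$, contradicting $\eta\ge\D\epsilon^p$ for $p<4$ and small $\epsilon$. Both arguments hinge on $p<4$, but at different points; yours is somewhat more self-contained, resting only on Lemma~\ref{l:shortest-to-line} and a careful expansion.

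The caveat: both your computation and the paper's silently assume $x_2$ lies between $x_1$ and $x_3$ so that $|\Delta x_{12}|+|\Delta x_{23}|=|\Delta x_{13}|$ and the leading terms telescope. The lemma's stated hypotheses do not force this; if $x_1,x_3$ sit on the same side of $x_2$, then $\eta\approx 2\min(|x_1|,|x_3|)$ is of order $\alpha_1$, while the conclusion would demand some $|y_i|\gtrsim\sqrt{\alpha_1}$, which is incompatible with $|y_i|\le 2\epsilon<2\epsilon_0$. You flag the issue, which is good, but your parenthetical claim that $\D_2$ rules out the bad configuration is mistaken: $\D_2>1$ merely enlarges the set where $\sup d(z,\R)\le\epsilon\diam B$ is imposed (that is indeed why it appears in the statement, for the use in Proposition~\ref{p:future-balls}) and says nothing about the relative $x$-ordering of the three points. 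What actually saves the lemma in its application is that there the excess equals $\D\epsilon^q$ with $\epsilon$ small, and near-collinearity of the projections then forces the correct ordering; strictly speaking this should appear as an additional hypothesis, or one should note that when $\eta$ is of order one the conclusion is vacuous only if $\epsilon_0$ were allowed to be moderately large. This is a flaw in the statement and in both proofs, not something specific to your approach.
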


\begin{proof}
  Let $\D_3$ denote the minimal number such that
  \begin{align*}
    ((x + y)^2 + z)^{1/4} \leq x^{1/2} + \D_3(y + z),
  \end{align*}
  when $x,y,z \in \R^+$ satisfy the bounds $\frac{\alpha_1}{2} \leq x \leq \alpha_2$, $0 \leq y \leq 1$, and $0 \leq z \leq 1$.  That such a $\D_3$ exists follows from repeated use of Taylor's approximation and clearly depends only on $\alpha_1$ and $\alpha_2$.  Then we set $\D_0 = \max\{150, 150\D_3^{-1/2}\}$.

  Suppose the lemma is false, that is, we have \eqref{e:curvature-ineq-assumption} but the $y$ coordinates for all the $p_i$ have absolute value less than $\frac{1}{\D_0} \eta^{1/2} \diam(B)$.  We can dilate the setting so that $\diam(B) = 1$ and translate so that the $x$ coordinate of $p_2$ is 0.  We label the points $p_i = (x_i,y_i,z_i)$ so that $x_2 = 0$.  Then we have that
  \begin{multline*}
    d(p_1,p_2) + d(p_2,p_3) - d(p_1,p_3) \leq \left( (x_1^2 + (y_1 - y_2)^2)^2 + ( z_1 - z_2 + 2x_1y_2 )^2 \right)^{1/4} \\
    + \left( (x_3^2 + (y_2 - y_3)^2)^2 + ( z_2 - z_3 - 2x_2y_3 )^2 \right)^{1/4} - |x_1 - x_3|.
  \end{multline*}
  As we are supposing that $|y_i| < \frac{1}{\D_0} \eta^{1/2}$, we must have that $|y_1 - y_2| < \frac{2}{\D_0} \eta^{1/2}$.

  We also claim that
  \begin{align*}
    |\Sigma_{p_1,p_2}| = \frac{1}{4} \left| z_1 - z_2 + 2 x_1y_2 \right| < \frac{2}{\D_0} \eta^{1/2}.
  \end{align*}
  If not, as the $y$-coordinate of $\pi(p_1)$ and $\pi(p_2)$ are both less than $\frac{1}{\D_0} \eta^{1/2}$ and the $x$-coordinates differ by no more than 1, we get that the algebraic area of the trapezoid $T$ with corners $\pi(p_1)$, $\pi((p_1)_L)$, $\pi(p_2)$, and $\pi((p_2)_L)$ is no more than $\frac{1}{\D_0} \eta^{1/2}$.  Thus, we would have that
  \begin{align*}
    \left|\Sigma_{p_1,p_2}^L\right| \geq |\Sigma_{p_1,p_2}| - T > \frac{1}{\D_0} \eta^{1/2} \overset{\eqref{e:curvature-ineq-assumption}}{\geq} \frac{\D^{1/2}}{\D_0} \epsilon^{p/2}.
  \end{align*}
  As Lemma \ref{l:trap-unaffine} then says that
  \begin{align*}
    \max\{d(p_1,L),d(p_2,L)\} > \frac{\D^{1/4}}{2\D_0^{1/2}} \epsilon^{p/4}.
  \end{align*}
 {
  As $p < 4$, 
}
  we see that we would contradict the fact that $\beta_\Kset(B) = \epsilon$ if $\epsilon \leq \epsilon_0$ for some $\epsilon_0$ that we can set to depend only on $\D_0$, $\D$, and $p$.

  Finally, as $\eta \leq 2$, we have that both $|y_1 - y_2| < \frac{2}{\D_0} \eta^{1/2} \leq 1$ and $| z_1 - z_2 + 2x_1y_2 | < \frac{8}{\D_0} \eta^{1/2} \leq 1$.  Thus, we have by definition of $\D_3$ that
  \begin{align*}
    \left( (x_1^2 + (y_1 - y_2)^2)^2 + ( z_1 - z_2 + 2x_1y_2 )^2 \right)^{1/4} \leq |x_1| + \D_3 \left( |y_1 - y_2|^2 + | z_1 - z_2 + 2x_1y_2 |^2 \right).
  \end{align*}
  The same thing holds with $d(p_2,p_3)$ and so we get by our choice of $\D_0$ that
  \begin{multline*}
    d(p_1,p_2) + d(p_2,p_3) - d(p_1,p_3) \leq \\
    \D_3\left( (y_1 - y_2)^2 + ( z_1 - z_2 + 2x_1y_2 )^2 + (y_2 - y_3)^2 + ( z_2 - z_3 - 2x_2y_3 )^2 \right) < \eta,
  \end{multline*}
  a contradiction.
\end{proof}

\section{Main proposition}

\begin{remark}\label{r:DFFP}
In Proposition \ref{p:future-balls} below, we always have $q\in [2,p)$ or 
$$    d(p_1,p_2) + d(p_2,p_3) - d(p_1,p_3) \leq \D_{\FFP}\beta_\H(B)^2\diam(B)$$
The existence of the constant $\D_{\FFP}<\infty$  follows from Theorem 2.14 of \cite{FFP} (see equation (2.51) there).
\end{remark}

\begin{proposition} \label{p:future-balls}
  Let $p<4$, $0 < \alpha_1 < \alpha_2 < 1$ and  $\D_7>1$ be given. 
  Let $D=D_{\FFP}(\alpha_1/\D_7,\alpha_2/\D_7) > 0$ be the constant from Remark \ref{r:DFFP}.  
  There exists constants $\D_1 = \D_1(\alpha_1,\alpha_2,p, \D_7) > 0$ and $\epsilon_1(\alpha_1,\alpha_2,p, \D_7) \in (0,1)$ so that the following holds.  Let $B \subset \H$ be a ball and suppose $\Kset \subseteq \H$ is such that 
  \begin{align*}
    \beta_\Kset(\D_7 B) = \frac{\epsilon}{\D_7} \leq \epsilon_1(\alpha_1,\alpha_2,p).
  \end{align*}
  If $p_1,p_2,p_3 \in \Kset \cap B$ so that $\alpha_1 \diam(B) \leq d(p_i,p_j) \leq \alpha_2 \diam(B)$,
  \begin{align}\label{e:assumption-1-in-prop-1.8}
    d(p_1,p_2) + d(p_2,p_3) - d(p_1,p_3) = \D \epsilon^q \diam(B) > \D \epsilon^p \diam(B),
  \end{align}
  for some $q < p$, and for every subball $B' \subseteq 4D_7 B$ of diameter at least $\frac{1}{\D_1} \epsilon^{q/2} \diam(B)$, $\Kset \cap B'$ is $\frac{1}{\D_1} \epsilon^{q/2} \diam(B)$-connected
 inside $E\cap \D_7 B'$, then there exists a subball $B'' \subseteq 16\D_7 B$ of diameter
  \begin{align*}
    \diam(B'') \geq \frac{1}{\D_1} \epsilon^{q/2} \diam(B)
  \end{align*}
  so that
  \begin{align}\label{e:at-2}
    d(p_1,p_2) + d(p_2,p_3) - d(p_1,p_3) \leq \D_1\beta_\Kset(\D_7 B'')^p \diam(\D_7 B'').
  \end{align}
  and
  \begin{align}\label{e:at-4}
    \beta_\Kset(\D_7 B'')^p \leq \D_1\epsilon^{q/2} 
  \end{align}

\end{proposition}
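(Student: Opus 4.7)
The plan is to locate $B''$ by starting with the horizontal line $L_0$ that realizes $\beta_E(\D_7 B)$ and iteratively refining via Lemma~\ref{l:angle-improvement}, stopping either when its second alternative delivers the target ball directly, or when the chain saturates and the FFP-type curvature estimate from Remark~\ref{r:DFFP} applies with a $\beta$ already large enough to make $\beta^2$ and $\beta^p$ comparable.

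\textbf{Initial setup.} Normalize $\diam(B) = 1$ and rotate/translate so that $L_0$ lies along the $x$-axis. Applying Lemma~\ref{l:large-R2-beta} to $p_1, p_2, p_3$ under hypothesis \eqref{e:assumption-1-in-prop-1.8} produces some $p_{i_0}$ whose $y$-coordinate has absolute value $\gtrsim (\D \epsilon^q)^{1/2}$, giving an initial angle parameter $M_0 \gtrsim \epsilon^{q/2}$ with respect to $L_0$. The width condition $\sup_{x, y \in E \cap B}|P_{L_0}(x) - P_{L_0}(y)| \geq \tfrac{1}{4}$ follows from the $\alpha_1$-spread of the $p_i$ combined with \eqref{e:closest-to-line}.

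\textbf{Iteration.} I construct a nested sequence $B_0 = B \supseteq B_1 \supseteq \cdots$ by applying Lemma~\ref{l:angle-improvement} to $(B_k, L_k)$, where $L_k$ realizes $\beta_E(B_k)$, setting $\epsilon_k = \beta_E(B_k)$ and letting $M_k$ be the current angle. The lemma's second alternative yields the desired $B''$ directly, whereas its first alternative produces $B_{k+1} \subseteq 2\D_7 B_k$ of diameter $30000(\epsilon_k^2/M_k)\diam(B_k)$, with new angle $M_{k+1} \gtrsim (M_k/\epsilon_k)^2 \diam(B_{k+1})$ and width condition preserved. The hypotheses for the next step ($\delta\diam(B_{k+1})$-connectivity, width, and the sandwich $\epsilon_{k+1} > M_{k+1} > 10^{10}\epsilon_{k+1}^2$) are verified from the proposition's assumption that $E \cap B'$ is $\tfrac{1}{\D_1}\epsilon^{q/2}\diam(B)$-connected in $E \cap \D_7 B'$ for every large enough subball $B' \subseteq 4\D_7 B$, together with the elementary bound $\epsilon_{k+1} \leq \epsilon_k \diam(B_k)/\diam(B_{k+1})$. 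Tracking the log-exponents of $M_k$ and $\epsilon_k$, the assumption $p < 4$ (and thus $q < 4$) forces the ratio $M_k/\epsilon_k$ to stabilize to $\Theta(1)$ in a bounded number $N = N(p)$ of steps, so the chain terminates within $N$ iterations and remains inside $(2\D_7)^N B \subseteq 16\D_7 B$ provided the constants are chosen compatibly with $\D_7$.

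\textbf{Closing and bounds.} At saturation, $M_N \sim \epsilon_N$ is bounded below by a constant depending only on $p$ and $\D_7$, so Remark~\ref{r:DFFP} applied inside $\D_7 B_N$ gives
\[
d(p_1,p_2) + d(p_2,p_3) - d(p_1,p_3) \leq \D_{\FFP} \beta_E(\D_7 B_N)^2 \diam(\D_7 B_N) \leq \D_1 \beta_E(\D_7 B_N)^p \diam(\D_7 B_N),
\]
which is \eqref{e:at-2}. The lower bound $\diam(B_N) \geq \tfrac{1}{\D_1}\epsilon^{q/2}\diam(B)$ and the upper bound \eqref{e:at-4} then follow by chasing the recursive formulas for $\diam(B_k)$ and $\epsilon_k$ through the $N$ iterations, using that each step multiplies the diameter by $30000 \epsilon_k^2/M_k$ and that the old line $L_k$ remains a valid competitor at the new scale. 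The main obstacle is exactly this constant-chasing: one must verify that $N$ depends only on $p$ (leveraging $p < 4$ crucially), that the nested balls never escape $16\D_7 B$, and that the final $\beta$ lands in the tight window $\beta_E(\D_7 B_N)^p \sim \epsilon^{q/2}$ dictated by the two-sided pairing of \eqref{e:at-2} and \eqref{e:at-4} under a single common constant $\D_1 = \D_1(\alpha_1, \alpha_2, p, \D_7)$.
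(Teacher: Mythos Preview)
Your outline follows the paper's structure closely up to the iteration: apply Lemma~\ref{l:large-R2-beta} to get an initial ``angle'' $M \gtrsim \epsilon^{q/2}$, then repeatedly feed the output of Lemma~\ref{l:angle-improvement} back into itself. The iteration and the role of $p<4$ in bounding its depth are exactly right in spirit.

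The closing argument, however, has a genuine gap. You write that at saturation ``$M_N \sim \epsilon_N$ is bounded below by a constant depending only on $p$ and $\D_7$'' and then invoke Remark~\ref{r:DFFP} inside $\D_7 B_N$ to obtain
\[
d(p_1,p_2)+d(p_2,p_3)-d(p_1,p_3) \le \D_{\FFP}\,\beta_E(\D_7 B_N)^2\,\diam(\D_7 B_N).
\]
Both parts fail. First, the FFP curvature inequality of Remark~\ref{r:DFFP} bounds the triangle excess of a triple that is \emph{spread out inside the ball to which it is applied}; your points $p_1,p_2,p_3$ live in the original ball $B$, not in the much smaller $B_N$, so the inequality simply does not apply there. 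Second, and more fundamentally, the iteration does not in general terminate with $\beta_E(\D_7 B_N)$ bounded below by an absolute constant. In the paper's argument one introduces exponents $\alpha_k$ via $\beta_E(\D_7 B_k)=M_k^{\alpha_k}/\D_7$ and tracks the explicit recursions
\[
M_k \ge 10^{-10k}\bigl(M^2/\epsilon^2\bigr)^{2^{k-1}(1-\alpha_1)\cdots(1-\alpha_{k-1})},\qquad
\diam(B_k)=30000^k\bigl(M^2/\epsilon^2\bigr)^{1-2^{k-1}(1-\alpha_1)\cdots(1-\alpha_{k-1})}\frac{\epsilon^2}{M}\diam(B).
\]
Since each $\alpha_k>2/p$ (otherwise one is already done), and $p<4$ forces $2(1-2/p)<1$, the product $2^{m-1}(1-\alpha_1)\cdots(1-\alpha_{m-1})$ drops below $1/2$ after $m=m(p)$ steps. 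At that $m$ one computes \emph{directly} from the two displayed recursions that $\beta_E(\D_7 B_m)^p\diam(\D_7 B_m)\ge M_m^2\diam(B_m)/\D_7^{p-1}\gtrsim M^2/\epsilon\gtrsim \epsilon^{q-1}\ge \epsilon^q$, which is \eqref{e:at-2}; no FFP-type inequality is used, and $\beta_E(\D_7 B_m)$ need not be bounded away from zero. Your ``saturation to $\Theta(1)$'' picture corresponds only to one of several early-exit branches, not to the main termination case.

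Two smaller points: the containment claim $(2\D_7)^N B\subseteq 16\D_7 B$ is wrong as written --- the correct control comes from $\diam(B_{k+1})<\tfrac{1}{16}\diam(B_k)$, so the centers drift by a convergent geometric series and all balls sit inside a fixed multiple of $B$. And \eqref{e:at-4} is not just ``constant chasing'': the paper obtains it by a separate doubling step, enlarging $B_m$ until either it swallows $4\D_7 B$ or one more doubling would violate \eqref{e:at-2}, and then reading off \eqref{e:at-4} from whichever stopping condition fired.
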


\begin{proof}
  We first choose $\epsilon_1$ small enough so that $\epsilon_1 \leq \epsilon_0(\alpha_1,\alpha_2,p,\D)$ where $\epsilon_0$ is from Lemma \ref{l:large-R2-beta}.  By rotation, we may assume that the horizontal line realizing $\beta_\Kset(\D_7 B)$ projects to the $x$-axis.  Then as $\beta_\Kset(\D_7 B) = \frac{\epsilon}{\D_7}$, Lemma \ref{l:large-R2-beta} says there exists a constant $\D_0$ so that
  \begin{align}
    M := \frac{1}{\diam(B)} \sup_{z \in \Kset \cap B} \| \pi(z) - \pi(L) \| \geq \frac{\D^{1/2}}{\D_0} \epsilon^{q/2}. \label{e:M-epsq-ineq}
  \end{align}
  As $2 \leq q < 4$, if we set $\epsilon_1$ smaller than some constant depending only on $\D$ and $\D_0$, we then get that $\epsilon > M > 10^{10} \epsilon^2$.  Thus, an application of Lemma \ref{l:angle-improvement} gives us either a ball $B' \subset 2 \D_7 B$ for which
  \begin{align}
    &\diam(B') = 30000 \frac{\epsilon^2}{M} \diam(B), \notag \\
    &\tilde{\beta}_\Kset(B') \geq 10^{-10} \frac{M^2}{\epsilon^2}, \label{e:tbeta1-ineq}
  \end{align}
  or some other ball $B'' \subset 2 \D_7 B$
  \begin{align*}
    &\diam(B'') \geq M \diam(B), \\
    &\beta_\Kset(B'')^p \diam(B'') \geq 10^{-50} M \diam(B).
  \end{align*}
  If we have the latter case, then as $M \geq \frac{\D^{1/2}}{\D_0} \epsilon^{q/2} \geq \frac{\D^{1/2}}{\D_0} \epsilon^q$, we get that $B''$ is our needed ball if we specify $\D_1$ large enough.  Thus, we may suppose that we have a ball that satisfies the conditions in the first case.  Let us denote this ball $B_1$.

  We let $L_1$ denote the horizontal line that realizes the infimum of $\beta_\Kset(\D_7 B_1)$.  Then
  \begin{align}
    M_1 := \frac{1}{\diam(B_1)} \sup_{z \in \Kset \cap B_1} \|\pi(z) - \pi(L_1)\| \geq \tilde{\beta}_\Kset(B_1). \label{e:M1-tbeta-ineq}
  \end{align}
  We then let $\alpha_1 \in [0,1]$ be such that
  \begin{align}
    \beta_\Kset(\D_7 B_1) = \frac{M_1^{\alpha_1}}{\D_7}.
  \end{align}
  Suppose $\alpha_1 \leq \frac{2}{p}$.  Then as $\tilde{\beta}_\Kset(B_1) \leq 1$, we have that
  \begin{multline*}
    \beta_\Kset(\D_7 B_1)^p \diam(\D_7 B_1) = \frac{M_1^{p\alpha_1}}{\D_7^{p-1}} \diam(B_1) \overset{\eqref{e:M1-tbeta-ineq}}{\geq} \frac{1}{\D_7^{p-1}} \tilde{\beta}_\Kset(B_1)^2 \diam(B_1) \geq \D_7^{1-p} 10^{-20} \frac{M^3}{\epsilon^2} \diam(B) \\
    \overset{\eqref{e:M-epsq-ineq}}{\geq} \frac{\D^{3/2}}{10^{20} \D_7^{p-1} \D_0^3} \epsilon^{\frac{3q}{2} - 2} \diam(B) \geq \frac{\D^{3/2}}{10^{20} \D_7^{p-1} \D_0^3} \epsilon^q \diam(B).
  \end{multline*}
  In the last inequality, we used the fact that $q < p < 4$.  This would give that $\D_7 B_1$ is a ball that would satisfy the claim of the proposition for sufficiently large $\D_1$.  Thus, we may suppose that $\alpha_1 > \frac{2}{p}$.

  Now suppose $M_1 \leq 10^{10} M_1^{2\alpha_1}$, that is,
  \begin{align*}
    M_1 \geq 10^{-10 (1-2\alpha_1)}.
  \end{align*}
  As $p$ is some fixed number strictly less than 4 and $\alpha_1 > \frac{2}{p}$, we get that there exists some $C > 0$ depending only on $p$ so that $M_1 > C$.  Thus,
  \begin{multline*}
    \beta_\Kset(\D_7 B_1)^p \diam(\D_7 B_1) = \frac{M_1^{\alpha_1 p}}{\D_7^{p-1}} 30000 \frac{\epsilon^2}{M} \diam(B) \geq 30000 \frac{C^{\alpha_1 p}}{\D_7^{p-1}} M \diam(B) \\
    > 30000 \frac{C^{\alpha_1 p}D^{1/2}}{\D_7^{p-1}\D_0} \epsilon^{q/2} \diam(B) \geq 30000 \frac{C^{\alpha_1 p}D^{1/2}}{\D_7^{p-1} \D_0} \epsilon^q \diam(B).
  \end{multline*}
  Again, we would have that $B_1$ is a ball that would satisfy the claim of the proposition for sufficiently large $\D_1$.  Thus, we may suppose $M_1 > 10^{10} M_1^{2\alpha_1}$.

  We now have the following information about $L_1$ and $B_1$:
  \begin{align*}
    &M_1^{\alpha_1} > M_1 > 10^{10} M_1^{2\alpha_1}, \\
    &\sup_{x,y \in \Kset \cap B_1} |P_{L_1}(x) - P_{L_1}(y)| \geq \frac{1}{4} \diam(B_1), \\
    &\sup_{z \in \Kset \cap \D_7 B_1} d(z,L_1) = M_1^{\alpha_1} \diam(B_1), \\
    &\sup_{z \in \Kset \cap B_1} \|\pi(z) - \pi(L_1)\| = M_1 \diam(B_1), \\
    &\diam(B_1) = 30000 \frac{\epsilon^2}{M} \diam(B).
  \end{align*}

  Suppose that we have a sequence of subballs $B_1,...,B_m$ with the following properties.  Each $B_j$ is contained in $2\D_7 B_{j-1}$.  If $L_k$ is the horizontal line realizing $\beta_\Kset(\D_7 B_k)$ then
  \begin{align*}
    \sup_{x,y \in \Kset \cap B_k} |P_{L_k}(x) - P_{L_k}(y)| \geq \frac{1}{4} \diam(B_k).
  \end{align*}
  If
  \begin{align*}
    M_k := \frac{1}{\diam(B_1)} \sup_{z \in \Kset \cap B_k} \|\pi(z) - \pi(L_k)\| \geq \tilde{\beta}_\Kset(B_k),
  \end{align*}
  then
  \begin{align*}
    \beta_\Kset(\D_7 B_k) = \frac{M_k^{\alpha_k}}{\D_7},
  \end{align*}
  for some $\alpha_k \in (2/p,1]$.  Furthermore we have the estimates
  \begin{align}
    &M_k^{\alpha_k} > M_k > 10^{10} M_k^{2\alpha_k}, \notag \\
    &M_k \geq 10^{-10k} \left( \frac{M^2}{\epsilon^2} \right)^{2^{k-1} (1-\alpha_1) \cdots (1-\alpha_{k-1})}, \label{e:Mk-ineq} \\
    &\diam(B_k) = 30000^k \left( \frac{M^2}{\epsilon^2} \right)^{1 - 2^{k-1} (1-\alpha_1) \cdots (1-\alpha_{k-1})} \frac{\epsilon^2}{M} \diam(B). \label{e:diam-Bk}
  \end{align}
  Here, the term $2^{k-1} (1-\alpha_1) \cdots (1-\alpha_{k-1})$ is understood to be 1 if $k =1$.  Then
  \begin{multline*}
    \frac{1}{\D_1} \epsilon^{q/2} \diam(B) = \frac{1}{\D_1} \epsilon^{q/2} \frac{\diam(B)}{\diam(B_m)} \diam(B_m) \\
    \overset{\eqref{e:diam-Bk}}{=} \frac{1}{30000^m \D_1} M\epsilon^{\frac{q}{2}-2} \left( \frac{\epsilon^2}{M^2} \right)^{1 - 2^{m-1} (1-\alpha_1) \cdots (1-\alpha_{m-1})}\diam(B_m) \leq \frac{1}{100} M_m \diam(B_m),
  \end{multline*}
  where in the last inequality we used the fact we can set $\D_1$ large enough and that
  \begin{align*}
    M_m \overset{\eqref{e:Mk-ineq}}{\geq} 10^{-10m} \left( \frac{\epsilon^2}{M^2} \right)^{1-2^{m-1} (1-\alpha_1) \cdots (1-\alpha_{m-1})} \frac{M^2}{\epsilon^2} \overset{\eqref{e:M-epsq-ineq}}{\geq} \frac{10^{-10m}D^{1/2}}{\D_0} \left( \frac{\epsilon^2}{M^2} \right)^{1-2^{m-1} (1-\alpha_1) \cdots (1-\alpha_{m-1})} M \epsilon^{\frac{q}{2}-2}.
  \end{align*}

  Thus, we can apply Lemma \ref{l:angle-improvement} to $B_m$ to give us either a ball $B' \subseteq 2\D_7 B_m$ so that
  \begin{align*}
    &\diam(B') = 30000 \frac{M_m^{2\alpha_m}}{M_m} \diam(B_m) \overset{\eqref{e:diam-Bk}}{=} 30000^{m+1} \left( \frac{M^2}{\epsilon^2} \right)^{1 - 2^m (1-\alpha_1) \cdots (1-\alpha_m)} \frac{\epsilon^2}{M} \diam(B), \\
    &\tilde{\beta}_\Kset(B') \geq 10^{-10} M_m^{2(1-\alpha_m)} \overset{\eqref{e:Mk-ineq}}{\geq} 10^{-10(m+1)} \left( \frac{M^2}{\epsilon^2} \right)^{2^m (1-\alpha_1) \cdots (1-\alpha_m)},
  \end{align*}
  or some other ball $B'' \subseteq 2\D_7 B_m$
  \begin{align*}
    &\diam(B'') \geq M_m \diam(B_m) \overset{\eqref{e:Mk-ineq} \wedge \eqref{e:diam-Bk}}\geq 10^{-10k} \left( \frac{M^2}{\epsilon^2} \right) \frac{\epsilon^2}{M} \diam(B) \geq 10^{-10k} M \diam(B), \\
    &\beta_\Kset(B'')^p \diam(B'') \geq 10^{-50} M_m \diam(B_m) \geq 10^{-10k-50} M \diam(B).
  \end{align*}
  If we have the latter case, then as $M \geq \frac{D^{1/2}}{\D_0} \epsilon^{q/2} \geq \frac{\D^{1/2}}{\D_0} \epsilon^q$, we get that $B''$ is our needed ball if we specify that $\D_1$ is large enough.  Thus, we can inductively construct these $B_k$.

  Like before, we let $L_{m+1}$ denote the horizontal line that realizes the infimum of $\beta_\Kset(\D_7 B_{m+1})$.  Then
  \begin{align*}
    M_{m+1} := \frac{1}{\diam(B_{m+1})} \sup_{z \in \Kset \cap B_{m+1}} \|\pi(z) - \pi(L_{m+1})\| \geq \tilde{\beta}_\Kset(B_{m+1}).
  \end{align*}
  We then let $\alpha_{m+1} \in [0,1]$ be such that
  \begin{align*}
    \beta_\Kset(\D_7 B_{m+1}) = \frac{M_{m+1}^{\alpha_{m+1}}}{\D_7}.
  \end{align*}
  As before, we may suppose that $\alpha_{m+1} > \frac{2}{p}$ and that $M_{m+1} > 10^{10} M_{m+1}^{2\alpha_{m+1}}$ as otherwise we would be done if we specify that $\D_1$ is large enough.  Thus, we have exhibited a subball $B_{m+1}$ that allows us to apply Lemma \ref{l:angle-improvement} again.

  Continuing inductively, we get that for each $k > 0$, if we specify $\D_1$ large enough, then we can find subballs satisfying \eqref{e:Mk-ineq} and \eqref{e:diam-Bk} (if such a ball does not exist, then sometime during the induction, we would have found a ball that satisfies the conclusion of the proposition).  Note that $\D_1$ for now depends on the $k$ that we specify.
  
  Let $m$ be the smallest integer such that $2^{m-1} \left( 1 - \frac{2}{p} \right)^{m-1} \leq \frac{1}{2}$.  Such a number exists as $p < 4$ and so $1 - \frac{2}{p} < \frac{1}{2}$.  We then see that $m$ is a constant depending only on $p$.  As $\alpha_m p > 2$, we get that
  \begin{multline*}
    \beta_\Kset(\D_7 B_m)^p \diam(\D_7 B_m) \geq \frac{M_m^2}{\D_7^{p-1}} \diam(B_m) \\
    \overset{\eqref{e:Mk-ineq} \wedge \eqref{e:diam-Bk}}{\geq} \frac{1}{10^{10m}\D_7^{p-1}} \left( \frac{M^2}{\epsilon^2} \right)^{2^{m-1} (1-\alpha_1) \cdots (1-\alpha_{m-1})} M \diam(B) = (*).
  \end{multline*}
  Note that
  \begin{align*}
    2^{m-1} (1 - \alpha_1) \cdots (1 - \alpha_{m-1}) \leq 2^{m-1} \left( 1 - \frac{2}{p} \right)^{m-1} \leq \frac{1}{2},
  \end{align*}
  where we again used the fact that $\alpha_k > \frac{2}{p}$.  As $M \leq \epsilon$, we get that
  \begin{align*}
    (*) \geq \frac{1}{10^{10m}\D_7^{p-1}} \frac{M^2}{\epsilon} \diam(B) \overset{\eqref{e:M-epsq-ineq}}{\geq} \frac{1}{10^{10m}\D_7^{p-1}} \frac{\D}{\D_0^2} \epsilon^{q-1} \diam(B) \geq \frac{1}{10^{10m}\D_7^{p-1}} \frac{\D}{\D_0^2} \epsilon^q \diam(B).
  \end{align*}
  As $m$ is some constant depending only on $p$, so is the needed $\D_1$ and so we get that by choosing $\D_1$ even larger
  \begin{align}
    d(p_1,p_2) + d(p_2,p_3) - d(p_1,p_3) \leq \D_1 \beta_\Kset(\D_7 B_m)^p \diam(\D_7 B_m). \label{e:at-3}
  \end{align}

  Note that $B_m$ satisfies all the properties of $B''$ except for possibly \eqref{e:at-4}.  In order to accomplish this, we iteratively double $B_m$ until we get a ball containing $4\D_7 B$ or until one more doubling will give us that \eqref{e:at-3} will be violated.  We note that in each doubling, the right hand side of \eqref{e:at-3} goes down by at most a factor of $2^{p-1}$. Call the resulting ball $B''$.
If we stopped because of the former condition, then
$$\beta_\Kset(\D_7 B'')\leq  4\beta_\Kset(16\D_7 B) \leq  \frac1{4 \D_7} \epsilon\,.$$
If we stopped because of the latter condition, then
$$D\epsilon^q \diam(B)\geq 
2^{1-p} D_1 \beta_\Kset(\D_7 B'')^p\diam(\D_7B'') 
\geq
2^{1-p} D_1 \beta_\Kset(\D_7 B'')^p\frac{1}{\D_1}\epsilon^{q/2}\diam(B)$$
which gives
$$D\epsilon^{q/2}
\geq
2^{1-p}  \beta_\Kset(\D_7 B'')^p\,.$$
Combining the two estimates gives \eqref{e:at-4} by making $\D_1$ large enough.
 
\end{proof}


\section{The construction and its length}
Let $\Kset \subset \H$ be a set and $r \in (2,4)$.  We would like to construct $\Gamma\supset E$, a continuum, such that we control the length of $\Gamma$ by 
\begin{equation}\label{e:power-p-upper-bound}
\diam(\Kset) + \int_H \int_{t\geq 0} \beta_\H(B(x,r))^{\ourpower}  \frac{dt}{t^4}d\cH^4(x),
\end{equation}
which we are assuming is bounded.
To this end, we will use the algorithm from \cite{Jones-TSP}.
This had been done before in \cite{FFP}, where  the estimates established \eqref{e:power-p-upper-bound} for $\ourpower=2$.
We will follow the same notation and refer to the detailed work done in section 3 of \cite{FFP}.  In fact, with some appropriate choices, we will {\bf only need to modify the estimates} for one  of their cases in a non-trivial manner. 
In other words, the construction in \cite{FFP} works {\it as is} (but see Remark \ref{r:change-metric}!).  However, the estimate \eqref{e:power-p-upper-bound} for $\ourpower=2$ obtained in \cite{FFP} can be improved to yield
\eqref{e:power-p-upper-bound} for any $\ourpower\in(2,4)$, and that is what we do below.
We let 
$$p=\frac{\ourpower+4}2 < 4\,.$$

\begin{remark}\label{r:change-metric}
in \cite{FFP} the Carnot-Carath\'{e}odory metric was used for the definitions and construction.  We will use a different (equivalent)  metric, namely the Koranyi metric.  When we say we use the \cite{FFP} construction, we mean we use the same algorithm, but with respect to the Koranyi metric, i.e. all nets, balls etc. are with respect to this metric.
\end{remark}

\begin{remark}
 We will assume that the reader is very familiar with \cite{FFP} and has it on hand.  
In order to avoid confusion, in this section we use the exact same notation as in \cite{FFP}. 
We allow ourselves to reduce the value of the constant $\epsilon_0>0$ and increase the value of the constant $C_1>1$. We will assume in particular that $\epsilon_0<\epsilon_1$ of  Proposition \ref{p:future-balls}.
\end{remark}

\begin{remark}\label{r:d7}
We describe the dependency of constants below. All of them are allowed to depend on $\ourpower$.
When we will be invoking Proposition  \ref{p:future-balls},we will always do so with the constant
$$\D_7  = 2C_1\,.$$
$C_1$ needs to be large enough.  
The constant $\D_1$ depends on $\ourpower, C_1$.
The constant $\D_{10}$ depends on $C_1$, $\D_7$ and $\ourpower$.
The constant $R$ depends on $C_1$, $\D_1$ and $\ourpower$.
The constant $\epsilon_0$ depends on $C_1$, $R$ and $\ourpower$.
The constant $C_{\ourpower}$ depends on $C_1$ and $\ourpower$.
The constant $\epsilon_0$  is the only one that needs to be sufficiently small, the rest  of the dependancies are lower bounds.
\end{remark}

The construction in \cite{FFP} is inductive, and there are 3 hypotheses which hold at every step of the process: (P1), (P2),(P3).  We will add two more (P4) and (P5),  and claim that they  hold as well.

\begin{itemize}
\item[(P4)]: For any $P\in \Delta_k$, we have that $\ball(P, C_12^{-k})\cap \Delta_k$ is connected via $\Gamma_k\cap \ball(P, C_12^{-k})$.  

\item[(P5)]: If $\beta_\H(\ball(P, C_12^{-k}))<\epsilon_0$ and $I\in \Gamma_k\cap \ball(P, C_12^{-k})$ is an interval, then $I$ is in the $2^{-k}C_1\epsilon_0$ neighborhood of $\Gamma_{k+1}$. Furthermore, there is a map $I\to I_1$ which take the interval $I$ to a polygonal curve $I_1\subset \Gamma_{k+1}$. This is the only way in which an interval $I$ may be deleted.
\end{itemize}

Suppose without loss of generality that $E$ is closed.
We now proceed precisely as in the construction of \cite{FFP}, however we will replace  the estimates of 
{\bf Case B1} with {\bf Case B1'}, specifically, we will improve on equation (3.4), p. 465 of \cite{FFP} via improving on the estimates for the quantities $\cS_1$, $\cS_2$. {\bf Case B2(i).2} will follow similar changes.
Let $B=\ball(P, C_1 2^{-j})$.

{\bf Case B1':} Since  $\beta_\H(B)<\epsilon_0$ (which we may, as we are not in Case A),  
there exists an order on $\Delta_j\cap \ball(P, C_12^{-j})=[P_1,...,P_n]$. We separate into two case, based on the validity of the assumptions for  Proposition \ref{p:future-balls}.

{\bf Case B1'(i):} For each triple of the form $[p_1,p_2,p_3]=...[P_{i_1}, P_{i_2}, P_{i_3}]$ where $1\leq i_1<i_2<i_3\leq n$ one of two things happens:  either we may apply  Proposition \ref{p:future-balls}, or assumption \eqref{e:assumption-1-in-prop-1.8}  of  Proposition \ref{p:future-balls} fails for all $q<p$.

Suppose  Proposition \ref{p:future-balls} is applicable to $B$ and the triple of points $P_{i_1}, P_{i_2}, P_{i_3}$. Then it guaranties  a ball 
$B'''=B'''[P_{i_1}, P_{i_2}, P_{i_3}]$. 
There is a ball which we denote by  $F_1(B, i_1,i_2,i_3)$ with center $z\in \Delta_k$ and radius $C_12^{-k}$ for some $k\in \N$, such that $\frac{1}{1000}B'''\subset F_1(B, i_1,i_2,i_3) \subset 1000 B'''$.
By multiplying the  constant $D_1$ by a factor, we may assume without loss of generality that the conclusion of  Proposition \ref{p:future-balls} holds for $F_1(B, i_1,i_2,i_3)$.
Using this notation we proceed. As in \cite{FFP} and using Proposition \ref{p:future-balls} as well as the fact that the numbers of tuples $1\leq i_1<i_2<i_3\leq n$ is bounded independently of $B$ (or $E$) we have
$$\cS_1\leq 
	C\bigg( \beta_\H(B)^p\diam(B) + 
	\sum_{1\leq i_1<i_2<i_3\leq n \atop (*)} 
		\D_1\beta_\H(\D_7 F_1(B, i_1,i_2,i_3))^p
				\diam(\D_7 F_1(B, i_1,i_2,i_3))\bigg)\,,$$
where
{
\begin{center}
(*)    \quad    Proposition \ref{p:future-balls}  is applicable.
\end{center}
}
Because of the first term, this inequality holds even if assumption \eqref{e:assumption-1-in-prop-1.8} fails for all $q<p$.
We have that when $F_1(B, i_1,i_2,i_3)$ is defined, it satisfies (for $q\in [2,4)$ depending on $[P_{i_1}, P_{i_2}, P_{i_3}]$)
\begin{equation*}
\diam(F_1(B, i_1,i_2,i_3))\geq \frac1{\D_1}\beta_\H(B)^{q/2}\diam(B)\,. 
\end{equation*}
which may be weakened to 
\begin{equation*}
\diam(F_1(B, i_1,i_2,i_3))\geq \frac1{\D_1}\beta_\H(B)^{p/2}\diam(B)\,. 
\end{equation*}

Similarly for $\cS_2$. This gives an improvement over equation (3.4) in \cite{FFP} (changing the value of $C$ to incorporate $\D_1$):
\begin{align*}
l(\Gamma_j)-l(\Gamma_{j-1})&\\ 
\leq&	C\bigg(\beta_\H(B)^p\diam(B) + 
	\sum_{1\leq i_1<i_2<i_3\leq n \atop (*)} 
		\beta_\H(\D_7 F_1(B, i_1,i_2,i_3))^p\diam(\D_7  F_1(B, i_1,i_2,i_3))\bigg)\,. 
\end{align*}

Let denote by $\cB_{B1'i}$  the collection of all balls which fall into Case B1'(i) and satisfy (*).
 Using \eqref{e:at-4}, the function $B\to F_1(B,\cdot)$ is at most 
$C\log(1/\beta_\H(B))$ to 1.
Thus
\begin{align*}
\sum\limits_{B\in \cB_{B1'i} } &
		\beta_\H(\D_7 F_1(B, i_1,i_2,i_3))^p\diam(\D_7  F_1(B, i_1,i_2,i_3)) \\
&\leq
\sum\limits_{t=0}^\infty
\sum\limits_{ B\in \cB_{B1'i}\atop  \beta_\H(B)\in[2^{-t}, 2^{-t+1})}
		\beta_\H(\D_7 F_1(B, i_1,i_2,i_3))^p\diam(\D_7  F_1(B, i_1,i_2,i_3)) \\
&\leq
\sum\limits_{t=0}^\infty 2^{-(t-1)(p-\ourpower)}
\sum\limits_{ B\in \cB_{B1'i}\atop   \beta_\H(B)\in[2^{-t}, 2^{-t+1})}
		\beta_\H(\D_7 F_1(B, i_1,i_2,i_3))^\ourpower \diam(\D_7  F_1(B, i_1,i_2,i_3)) \\
&\leq
\sum\limits_{t=0}^\infty t2^{-(t-1)(p-\ourpower)}
\sum\limits_{j\in\N}\sum\limits_{P\in \Delta_j} 
	\beta_\H(P,\D_{10} 2^{-j})^{\ourpower}2^{-j}\\
&\leq
C_{\ourpower}
\sum\limits_{j\in\N}\sum\limits_{P\in \Delta_j} 
	\beta_\H(P,\D_{10} 2^{-j})^{\ourpower}2^{-j}
\end{align*}
where $C_{\ourpower}=\sum\limits_{t=0}^\infty t2^{-(t-1)(p-\ourpower)}<\infty$ as $p=\frac{r+4}{2}$.
We summarize this as 
\begin{align}\label{e:sum-prop41applies}
\sum\limits_{B\in \cB_{B1'i} } 
		\beta_\H(\D_7 F_1(B, i_1,i_2,i_3))^p\diam(\D_7  F_1(B, i_1,i_2,i_3)) 
\leq
C_{\ourpower}
\sum\limits_{j\in\N}\sum\limits_{P\in \Delta_j} 
	\beta_\H(P,\D_{10} 2^{-j})^{\ourpower}2^{-j}\,.
\end{align}

{\bf Case B1'(ii):} In this case there is at least one (ordered) triple 
$[P_{i_1}, P_{i_2}, P_{i_3}]$,  $1\leq i_1<i_2<i_3\leq n$, where we cannot apply Proposition \ref{p:future-balls} and assumption \eqref{e:assumption-1-in-prop-1.8} holds for some $q\in[2,p)$. Let us fix such an instance $(i_1, i_2,i_3)$. 
Since we are not in Case A, we conclude the existence of a ball $F_2(B)=B'\subset 4\D_7 B$ of diameter 
$\diam(F_2(B))\geq \frac1{\D_1} \epsilon^{q/2}\diam(B)$ which has 
$E\cap F_2(B)$ is not $\frac1{\D_1} \epsilon^{q/2}\diam(B)$-connected 
inside $E\cap \D_7 F_2(B)$, 
where $\epsilon^q\diam(B)=\beta_\H(B)^q \diam(B)=\dist(P_{i_1}, P_{i_2}) + \dist (P_{i_2}, P_{i_3}) - \dist(P_{i_1},  P_{i_3})$. As stated in Remark \ref{r:d7}, $\D_7=2C_1$.

Let 
$$\alpha\in \left[ \frac1{\D_1} \epsilon^{q/2}\diam(B), \ 4\D_7\diam(B) \right]$$ 
be the largest number of the form $2^{-l}$ such that 
$E\cap F_2(B)$ is not $\alpha$-connected
inside $2C_1 F_2(B)$, 
but is $2\alpha$-connected 
inside $2C_1 F_2(B)$,
with  $F_2(B)\subset 4\D_7 B$ and  $\diam(F_2(B))\geq \alpha$. 
Let 
 $x,y\in E\cap F_2(B)$ be two points of distance $\in (\alpha, 2\alpha]$ which witness to this (discrete) non-connectedness, and minimize $\dist(x,y)$.  
Let $k$ be such that $2^{-k}=\alpha/{128}$, and let 
$x',y'\in\Delta_k\cap \frac{11}{10}F_2(B)$ be minimize distance  to $x,y$ respectively. 
If $C_1>2^{10}$, then 
$ \ball(x',C_1 2^{-k})\supset \ball(x',3\alpha)\ni y'$, 
and thus (from  (P4)),  
the points $x',y'$ are connected via 
$\Gamma_k\cap \ball(x',C_12^{-k})$ 
with a polygon $P_{x,y}$ of edges $\geq 2^{-k}$.

Since segments are only modified in cases other than Case A, we have by (P5) that $P_{x,y}$ is in the 
$2\alpha C_1^2\epsilon_0$ neighborhood of the limit curve $\Gamma$, and furthermore, that there is an arc  $\Gamma_{x,y}\subset \Gamma$ which contains $P_{x,y}$ in its $2\alpha C_1^2\epsilon_0$ neighborhood. 
If we take $\epsilon_0$ small enough so that $C_1^2\epsilon_0<1/100$, then $x,y$ are in the $\alpha/10$ neighborhood of $\Gamma_{x,y}$.  
Furthermore,
$\Gamma_{x,y}\subset 2\ball(x',C_12^{-k}) \subset C_1 F_2(B)$.
Recall that $x,y$ are not $\alpha$-connected  inside $2C_1 F_2(B)$ and that 
$\rad(F_2(B))\geq \alpha/2$, which gives us the following lemma.
\begin{lemma}
There is  a 
connected set $\Gamma_B\subset \Gamma_{x,y}$ such that

%
%
%


\begin{equation}\label{e:properties-Gamma-B-i}
 \Gamma_B\subset \Gamma
\end{equation}

\begin{equation}\label{e:properties-Gamma-B-ii}
 \frac{1}{10}\alpha \leq \diam(\Gamma_B) \leq  \frac2{10}\alpha
 \end{equation}

\begin{equation}\label{e:properties-Gamma-B-iii}
 \frac{1}{10}\alpha \leq \dist(\Gamma_B, \Kset) \leq   \alpha
\end{equation}
\end{lemma}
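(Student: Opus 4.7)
The plan is to locate a subarc of $\Gamma_{x,y}$ that detours away from $E$, using the fact that $E$ is deliberately \emph{not} $\alpha$-connected between $x$ and $y$ inside $2C_1 F_2(B)$, while $\Gamma_{x,y}$ does connect (points close to) $x$ and $y$. Parametrize $\Gamma_{x,y}$ as a continuous curve $\gamma:[0,1]\to \Gamma$ with $\gamma(0)$ within $\alpha/10$ of $x$ and $\gamma(1)$ within $\alpha/10$ of $y$ (as already established in the excerpt), and consider the continuous function $f(t)=\dist(\gamma(t),E)$.

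First I would show there is some $t^{*}\in(0,1)$ with $f(t^{*})\geq \alpha/3$. The argument is by contradiction: if $f(t)<\alpha/C$ on all of $[0,1]$ for a sufficiently large absolute constant $C$, use uniform continuity of $\gamma$ to pick $0=t_{0}<t_{1}<\cdots<t_{n}=1$ with $\dist(\gamma(t_{i}),\gamma(t_{i+1}))<\alpha/C$, and choose $z_{i}\in E$ with $\dist(z_{i},\gamma(t_{i}))<\alpha/C$. Then consecutive $z_{i}$ differ by less than $3\alpha/C<\alpha$, and since $\gamma(t_{i})\in C_{1}F_{2}(B)$ and $\alpha/C\ll \rad(F_{2}(B))$ we have $z_{i}\in 2C_{1}F_{2}(B)$. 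Prepending $x$ and appending $y$ (both at distance $<\alpha$ from the nearest $z_{i}$, since the endpoint deviations are $\leq \alpha/10+\alpha/C<\alpha$) produces an $\alpha$-chain in $E\cap 2C_{1}F_{2}(B)$ from $x$ to $y$, contradicting the defining property of the witness pair $(x,y)$.

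Second, since $f(0),f(1)\leq \alpha/10<\alpha/3\leq f(t^{*})$, the intermediate value theorem yields $t'\in[0,1]$ with $f(t')=\alpha/3$; set $z=\gamma(t')$. Finally, I would extract the subarc by a continuity argument on the diameter: for $s\geq 0$ let $\Gamma_{B}(s)$ be the subarc of $\Gamma_{x,y}$ of arclength $2s$ centered at $z$ (truncated at the endpoints of $\Gamma_{x,y}$ if needed). The function $s\mapsto \diam(\Gamma_{B}(s))$ is continuous, equals $0$ at $s=0$, and takes value $\geq \diam(\Gamma_{x,y})\geq \alpha-2\alpha/10>2\alpha/10$ at $s=\ell(\Gamma_{x,y})/2$, so by IVT there is a choice with $\diam(\Gamma_{B})\in[\alpha/10,2\alpha/10]$, giving \eqref{e:properties-Gamma-B-ii}. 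For \eqref{e:properties-Gamma-B-iii}, every $p\in \Gamma_{B}$ satisfies $\dist(p,z)\leq \diam(\Gamma_{B})\leq \alpha/5$, so
\begin{equation*}
\dist(p,E)\geq f(t')-\alpha/5 \;=\; \alpha/3-\alpha/5 \;>\; \alpha/10,
\end{equation*}
while the point $z\in \Gamma_{B}$ itself has $\dist(z,E)=\alpha/3\leq \alpha$. Property \eqref{e:properties-Gamma-B-i} is immediate from $\Gamma_{B}\subset \Gamma_{x,y}\subset \Gamma$.

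The main obstacle is the first step, where one must carefully manage constants so that the $\alpha$-chain built from a hypothetically close curve stays inside $2C_{1}F_{2}(B)$ and actually begins and ends at $x,y$ rather than at some nearby approximants; here the fact that $\rad(F_{2}(B))\geq \alpha/2$ together with $C_{1}\geq 2^{10}$ gives enough room, and the endpoint gap of $\alpha/10$ coming from (P5) with $C_{1}^{2}\epsilon_{0}<1/100$ is small enough to close off with one extra $\alpha$-jump on each side. The remaining steps are routine applications of continuity and the intermediate value theorem.
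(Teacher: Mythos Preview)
Your argument is correct and rests on exactly the same contradiction as the paper's: if $\Gamma_{x,y}$ stayed uniformly close to $E$, one could thread an $\alpha$-chain in $E\cap 2C_1F_2(B)$ from $x$ to $y$, contradicting the witness property of the pair $(x,y)$. The only difference is packaging---the paper runs the contrapositive over all connected subsets $G\subset\Gamma_{x,y}$ of diameter in $[\alpha/10,2\alpha/10]$ at once and then invokes connectedness of $\Gamma_{x,y}$ together with $x'\in E$ for the upper bound in \eqref{e:properties-Gamma-B-iii}, whereas you first pin down a specific point at distance $\alpha/3$ from $E$ via the intermediate value theorem and grow the subarc around it; one small quibble is that your constant $C$ must lie in the narrow window $[3,10/3]$ (you need both $3\alpha/C\leq\alpha$ for the chain step and $\alpha/C\geq 3\alpha/10$ for the lower bound in Step~3), so ``sufficiently large'' is not quite the right phrase.
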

\begin{proof}
We proceed by contradiction. Suppose that any candidate connected set $G$ satisfying \eqref{e:properties-Gamma-B-i} and \eqref{e:properties-Gamma-B-ii} fails 
$\frac{1}{10}\alpha \leq \dist(G, \Kset) $.  
This means that $x'$ and $y'$ are $\frac4{10}\alpha$-connected in $E\cap (C_1+1) F_2(B)$, which is  contradiction since  $x,y$ are not $\alpha$-connected  inside $2C_1 F_2(B)$ and 
$\dist(x,x'), \dist(y,y')<\alpha/128$. 
Thus we have a connected $G$ satisfying \eqref{e:properties-Gamma-B-i} and \eqref{e:properties-Gamma-B-ii} and $\frac{1}{10}\alpha \leq \dist(G, \Kset) $. The connectedness of $\Gamma_{x,y}$ and the fact that $x'\in E$ then implies that there is also a $G$ satisfying the RHS of \eqref{e:properties-Gamma-B-iii} in addition to the above properties.
\end{proof}

Let  $\largeconst$ be a large constant to be chosen, and recall that $p=\frac{\ourpower+4}{2}$, i.e. 
 $\smallconst+ p/2 <2$. 
Suppose also that $\epsilon_0$ is small enough so that  $ \epsilon_0^{\smallconst+p/2}>\largeconst \epsilon_0^2 $.  Since we have 
 $q<p<4$ and $\beta_\H(B)<\epsilon_0$, this improves \eqref{e:properties-Gamma-B-ii} above to  
\begin{equation}\label{e:enough-room-to-spare}
H^1(\Gamma_{B}) \beta_\H(B)^\smallconst\geq  \largeconst \frac1{2\D_1} \beta_\H(B)^2 \diam(B) \,. 
\end{equation}
which means that the cost of the \cite{FFP} algorithm in this case is dominated by 
$H^1(\Gamma_{B}) \beta_\H(B)^\smallconst$.

\begin{lemma}\label{l:log}
Let  $x_0\in \Gamma$ and $t\in\N$.
Then
\begin{equation}\label{e:how-many-balls}
\#\{B=\ball(z,C_12^{-j}):j\in\N; \ z\in\Delta_j;\  \Gamma_B \ni x_0;\ \beta_\H(B)\in [2^{-t}, 2^{-t+1})\} <Ct.
\end{equation} 

\end{lemma}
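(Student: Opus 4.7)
The plan is to split the count in \eqref{e:how-many-balls} as (the number of dyadic scales $j$ at which such a ball can exist) times (the number of admissible balls per scale), and to show that the first factor is $O(t)$ while the second is $O(1)$.

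Fix $x_0 \in \Gamma$ and set $d_0 := \dist(x_0, \Kset)$. Let $B = \ball(z, C_1 2^{-j})$ be any ball contributing to \eqref{e:how-many-balls}, and let $\alpha$ be the dyadic parameter from the Case B1'(ii) construction of $\Gamma_B$, so that $\alpha \in [\D_1^{-1} \beta_\H(B)^{q/2} \diam(B),\ 4\D_7 \diam(B)]$ for some $q \in [2,p)$. By \eqref{e:properties-Gamma-B-ii} and \eqref{e:properties-Gamma-B-iii}, $\dist(\Gamma_B, \Kset) \in [\alpha/10, \alpha]$ and $\diam(\Gamma_B) \leq \alpha/5$. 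Since $\beta_\H(B) \leq 1$ and $q < p$, the lower bound on $\alpha$ weakens to $\alpha \geq \D_1^{-1} 2^{-tp/2} \diam(B)$. Using $x_0 \in \Gamma_B$, I would then extract two-sided bounds on $\diam(B)$:
\begin{align*}
d_0 \geq \dist(\Gamma_B, \Kset) \geq \frac{1}{10\D_1}\, 2^{-tp/2} \diam(B), \qquad d_0 \leq \dist(\Gamma_B, \Kset) + \diam(\Gamma_B) \leq 5\D_7 \diam(B).
\end{align*}
Hence $\diam(B) \in [d_0/(5\D_7),\ 10\D_1 \cdot 2^{tp/2} d_0]$, so the admissible $j$ lie in a dyadic window of length $tp/2 + O(1) = O(t)$, using that $p = (r+4)/2 < 4$ is a fixed constant.

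For the count at each fixed admissible scale, I would appeal to geometric doubling. The containment $\Gamma_B \subset C_1 F_2(B) \subset 8 C_1 \D_7 B$ coming from the Case B1'(ii) construction, combined with $x_0 \in \Gamma_B$, forces $d(z, x_0) = O(C_1 \D_7 \diam(B)) = O(2^{-j})$. Since $\Delta_j$ is a $2^{-j}$-separated net in the geometrically doubling space $\H$, only $O(1)$ such centers $z \in \Delta_j$ qualify. Multiplying by the $O(t)$ admissible scales yields the asserted bound $Ct$. The main obstacle is the bookkeeping in the previous paragraph: the exponent $q$ in $\alpha \gtrsim \beta_\H(B)^{q/2} \diam(B)$ depends on the particular witnessing triple $(P_{i_1}, P_{i_2}, P_{i_3})$, but since $\beta_\H(B) < 1$ and $q < p$, replacing $q$ by $p$ costs only a factor $2^{t(p-q)/2} \leq 2^{tp/2}$ that is absorbed into the $O(t)$ scale count precisely because $p < 4$ is fixed.
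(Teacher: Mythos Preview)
Your proof is correct and follows essentially the same approach as the paper: split the count as (number of admissible dyadic scales) $\times$ (balls per scale), bound the first by $O(t)$ via the size and distance estimates \eqref{e:properties-Gamma-B-ii}--\eqref{e:properties-Gamma-B-iii} for $\Gamma_B$, and bound the second by $O(1)$ via $\Gamma_B \subset 2\D_7 B$ and geometric doubling. The only cosmetic difference is that you anchor to the fixed quantity $d_0 = \dist(x_0,\Kset)$, whereas the paper compares two arbitrary balls $B, B_1$ in the collection directly via $\diam(B) \leq C 2^{2t} \diam(B_1)$; these are equivalent since $x_0 \in \Gamma_B \cap \Gamma_{B_1}$ forces $d_0 \asymp \alpha \asymp \alpha_1$ up to the same $2^{O(t)}$ factors. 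One small remark: the paper uses $\beta^{q/2} > \beta^2 \geq 2^{-2t}$ (from $q < 4$) rather than your $\beta^{q/2} \geq \beta^{p/2} \geq 2^{-tp/2}$ (from $q < p$), but both yield an $O(t)$ window of scales, and the fact that $p$ is a \emph{fixed} constant is all that is needed there --- the bound $p<4$ plays no role at this step.
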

Equation \eqref{e:how-many-balls} will be used in conjunction with \eqref{e:enough-room-to-spare} above later on. 
\begin{proof}
First note that 
since $\H$ is doubling, only a fixed number of balls of any fixed scale may intersect at a point.  This, together with the fact that
$$\Gamma_B\subset \Gamma_{x,y}\subset 2\D_7B$$
gives us a uniform bound for the number of balls on the left hand side of \eqref{e:how-many-balls} of a single scale.
We now address the question of how many scales can come into play.

The answer  will
follow from \eqref{e:properties-Gamma-B-ii}, \eqref{e:properties-Gamma-B-iii} above. 
Let $B_1=\ball(z_1,C_12^{-j_1})$ be a ball, and suppose $x_0\in \Gamma_{B_1}$.
Now we have that
 if $B$ is a ball on the left hand side of \eqref{e:how-many-balls}, then 
\begin{multline}\label{e:different-scales}
\diam(B)\leq 
10\D_12^{2t}\dist(\Gamma_B,E)
\leq
 10\D_12^{2t} 
 \big(\dist(\Gamma_B,\Gamma_{B_1}) + \diam(\Gamma_{B_1}) +  \dist(\Gamma_{B_1},E)  \big)
 \leq\\
 10\D_12^{2t} 
 \big(8\D_7 \diam(B_1)\big)
=C2^{2t}\diam(B_1)
\end{multline}
and in the same way,  
$\diam(B_1)\leq 
C2^{2t}\diam(B)
$.
Thus, only $O(t)$ of scales need to be considered, giving the lemma.


\end{proof}

Let denote by $\cB_{B1'ii}$  the collection of all balls which fall into Case B1'(ii).
We sum as follows, using \eqref{e:enough-room-to-spare} and Lemma \ref{l:log}.
\begin{multline}\label{e:sum-of-B1ii}
\sum\limits_{ B\in \cB_{B1'ii}}\beta_\H(B)^2 \diam(B) \leq
\sum\limits_{ B\in \cB_{B1'ii}} \frac{2\D_1}{\largeconst} H^1(\Gamma_{B}) \beta_\H(B)^\smallconst\\
\leq
\sum\limits_{t=0}^\infty
\sum\limits_{ B\in \cB_{B1'ii}\atop \beta_\H(B)\in[2^{-t}, 2^{-t+1})}
\frac{4\D_1}{\largeconst} H^1(\Gamma_{B}) 2^{-t\smallconst}
\leq
\frac{4\D_1}{\largeconst} 
 \sum\limits_{t=0}^\infty
 tH^1(\Gamma)2^{-t\smallconst}
=
 \frac{4\D_1}{\largeconst} 
 C(\ourpower) H^1(\Gamma)
\end{multline}
Finally, we note that increasing $\largeconst$ (which forces us to decrease  $\epsilon_0$ accordingly), reduces 
$ \frac{4\D_1}{\largeconst}  C(\ourpower) $ 
 to being  
 arbitrarily close to 0.
 
%

\smallskip

{\bf Case B2(i).2} appeals to the estimates in Case B1.  The estimates for Case B1' work to give Case B2(i).2'.

\smallskip

The rest of the cases follow with the same estimates as those in Section 3 of \cite{FFP}.
This allows us to improve the estimate at the top of page 468 of \cite{FFP} to give \eqref{e:power-p-upper-bound} (increasing their value of $C_1$, and allowing $C$ to change form line to line).  Sepcifically, we use equations \eqref{e:sum-prop41applies}, \eqref{e:sum-of-B1ii} to get:

\begin{align*}
H^1(\Gamma)
&\leq 
C\diam(\Kset) 
+ 
C\sum\limits_{j\in\N}\sum\limits_{P\in \Delta_j} \beta_\H(P,C_1  2^{-j})^p 2^{-j}
+ 
\frac{1}{10^6}H^1(\Gamma) +  \frac{1}{10^3}H^1(\Gamma)\\
&+  
C\sum\limits_{j\in\N}\sum\limits_{P\in \Delta_j} \beta_\H(\D_7F_1(B(P,C_1 2^{-j})))^p 2^{-j}
+ 
\frac{4D_1}{\largeconst}  C_{\ourpower} H^1(\Gamma)\\
\leq& 
C\diam(\Kset) 
+ 
C_{\ourpower}\sum\limits_{j\in\N}\sum\limits_{P\in \Delta_j} \beta_\H(P,\D_{10} 2^{-j})^{\ourpower}2^{-j}
+
\frac{1}{10}H^1(\Gamma) 
\end{align*}
where we made  $R$ and $\D_{10}$ large enough. Note $R$ is independent of $C$ above, which is important since  $C$ above grows as $\epsilon_0 \to 0$, and $\epsilon_0$ depends on $R$.
Hence
\begin{align*}
H^1(\Gamma)
\leq 
C\diam(\Kset) 
+
C_{\ourpower}\sum\limits_{j\in\N}\sum\limits_{P\in \Delta_j} \beta_\H(P,\D_{10} 2^{-j})^{\ourpower}2^{-j}
\end{align*}
which is bounded in turn by  a constant multiple (dependent on $\ourpower$) of \eqref{e:power-p-upper-bound}.

\begin{bibdiv}
\begin{biblist}

\bib{AS-shortcuts}{article}{
  title={How to take shortcuts in Euclidean space: making a given set into a short quasi-convex set},
  author={Azzam, J.},
  author={Schul, R.},
  journal={Proceedings of the London Mathematical Society},
  volume={105},
  number={2},
  pages={367--392},
  year={2012},
  publisher={Oxford University Press}
}

\bib{balogh2006lifts}{article}{
  title={Lifts of Lipschitz maps and horizontal fractals in the Heisenberg group},
  author={Balogh, Z.  M.},
  author={Hoefer-Isenegger, R.},
  author={Tyson, J.  T.},
  journal={Ergodic Theory and Dynamical Systems},
  volume={26},
  number={3},
  pages={621},
  year={2006},
  publisher={Cambridge Univ Press}
}

%
%
\bib{cygan}{article}{
 title={Subadditivity of homogeneous norms on certain nilpotent Lie groups},
 author={Cygan, H.},
 journal={Proc. Amer. Math. Soc.},
 volume={83},
 number={1},
 date={1981},
 pages={69--70}
}
%
%
%
%
\bib{DS91}{article}{
 title={Singular integrals and rectifiable sets in $\mathbb{R}^n$: beyond Lipschitz graphs},
 author={David, G.},
 author={Semmes, S.},
 journal={Ast\'erique},
 date={1991},
 number={193}
}

\bib{DS93}{book}{
 title={Analysis of and on Uniformly Rectifiable Sets},
 author={David, G.},
 author={Semmes, S.},
 publisher={American Mathematical Society},
 place={Providence, RI},
 series={Mathematical Surveys and Monographs},
 volume={38},
 date={1993}
}

\bib{FFP}{article}{
 author={Ferrari, F.},
 author={Franchi, B.},
 author={Pajot, H.},
 title={The geometric traveling salesman problem in the Heisenberg group},
 journal={Rev. Mat. Iberoam.},
 volume={23},
 number={2},
 year={2007},
 pages={437--480},
}

\bib{Ha-non-AR}{article}{
	Author = {Hahlomaa, I.},
	Fjournal = {Fundamenta Mathematicae},
	Issn = {0016-2736},
	Journal = {Fund. Math.},
	Mrclass = {30E20 (51F99 54E35)},
	Mrnumber = {MR2163108},
	Mrreviewer = {Herv{\'e} Pajot},
	Number = {2},
	Pages = {143--169},
	Title = {Menger curvature and {L}ipschitz parametrizations in metric spaces},
	Volume = {185},
	Year = {2005}}

\bib{Ha-AR}{article}{
	Author = {Hahlomaa, I.},
	Journal = {Ann. Acad. Sci. Fenn. Math.},
	Optvolume = {32},
	Pages = {99-123},
	Title = {Curvature Integral and {L}ipschitz parametrizations in 1-regular metric spaces},
	Year = {2007}}

\bib{Jones-TSP}{article}{
 title={Rectifiable sets and the traveling salesman problem},
 author={Jones, P. W.},
 journal={Invent. Math.},
 volume={102},
 number={1},
 pages={1--15},
 date={1990}
}

\bib{juillet}{article}{
 author={Juillet, N.},
 title={A counterexample for the geometric traveling salesman problem in the Heisenberg group}, 
 journal={Rev. Mat. Iberoam.},
 volume={26},
 year={2010},
 number={3},
 pages={1035--1056},
}

\bib{li-carnot}{article}{
  author = {Li, S.},
  title = {Coarse differentiation and quantitative nonembeddability for Carnot groups},
  journal = {J. Funct. Anal.},
  volume = {266},
  pages = {4616-4704},
  year = {2014},
}

\bib{li-heisenberg}{misc}{
  author = {Li, S.},
  title = {Markov convexity and nonembeddility of the Heisenberg group},
  note = {Preprint},
  year = {2014},
}

%
\bib{Li-Schul-beta-leq-length}{article}{
 title={The traveling salesman problem in the Heisenberg group: upper bounding curvature},
 author={Li, S.}, 
 author={Schul, R.},
 journal={arXiv:1307.0050},
}

\bib{montgomery}{book}{
  author = {Montgomery, R.},
  title = {A tour of sub-Riemannian geometries, their geodesics and applications},
  volume = {91},
  series = {Mathematical Surveys and Monographs},
  publisher = {American Mathematical Society},
  year = {2002},
}
\bib{Ok-TSP}{article}{
 title={Characterizations of subsets of rectifiable curves in $\mathbb{R}^n$},
 author={Okikiolu, \Kset.},
 journal={J.~ London Math.~ Soc.~ (2)},
 volume={46},
 date={1992},
 pages={336--348}
}

\bib{Pajot-book}{book}{
 title={Analytic Capacity, Rectifiability, Menger Curvature and the Cauchy Integral},
 author={Pajot, H.},
 publisher={Springer-Verlag},
 place={Berlin},
 date={2002},
 series={Lecture Notes in Mathematics},
 volume={1799}
}

\bib{Schul-survey}{incollection}{
    AUTHOR = {Schul, R.},
     TITLE = {Analyst's traveling salesman theorems. {A} survey},
 BOOKTITLE = {In the tradition of {A}hlfors-{B}ers. {IV}},
    SERIES = {Contemp. Math.},
    VOLUME = {432},
     PAGES = {209--220},
 PUBLISHER = {Amer. Math. Soc.},
   ADDRESS = {Providence, RI},
      YEAR = {2007},
   MRCLASS = {49Q15 (28A75)},
  MRNUMBER = {2342818 (2009b:49099)},
MRREVIEWER = {Andrew Lorent},
}

\bib{Schul-AR}{article}{
 title={Ahlfors-regular curves in metric spaces},
 author={Schul, R.},
 journal={Ann. Acad. Sci. Fenn. Mat.},
 volume={32},
 date={2007},
 pages={437--460}
}

\bib{Schul-TSP}{article}{
 title={Subsets of rectifiable curves in Hilbert space---the Analyst's TSP},
 author={Schul, R.},
 journal={J. Anal. Math.},
 volume={103},
 date={2007},
 pages={331--375}
}

\bib{Tolsa}{book}{
title={Analytic capacity, the Cauchy transform, and non-homogeneous Calder{\'o}n-Zygmund theory},
  author={Tolsa, X},
  journal={Progress in Mathematics},
  volume={307},
  year={2012},
  publisher={Springer}
}

\end{biblist}
\end{bibdiv}

\end{document}